\pgfplotsset{compat=1.6}
\theoremstyle{plain}
\newtheorem{thm}{Theorem}[section]
\newtheorem{prop}[thm]{Proposition}
\newtheorem{lem}[thm]{Lemma}
\newtheorem{cor}[thm]{Corollary}
\newtheorem{rmk}[thm]{Remark}
\newtheorem{conj}[thm]{Conjecture}
\newtheorem*{theorem*}{Theorem}
\newtheorem*{conj*}{Conjecture}
\theoremstyle{definition}
\newtheorem{defin}[thm]{Definition}
\newcommand{\K}{\mathbb{K}}
\newcommand{\p}{\mathbb{P}}
\newcommand{\cal}{\mathcal}
\newcommand{\ov}{\overline}
\DeclareMathOperator{\ann}{Ann}
\DeclareMathOperator{\Hilb}{Hilb}
\DeclareMathOperator{\codim}{codim}
\title{On Minimal Gorenstein Hilbert functions}
\author{Lenin Bezerra}
\address{**Departamento de Matematica, Universidade Federal de Pernambuco, Recife PE (Brasil)}
\email{lenin.bezerra@ufpe.br}
\author{Rodrigo Gondim**}
\address{**Departamento de Matematica, Universidade Federal Rural de Pernambuco, Recife PE (Brasil)}
\email{rodrigo.gondim@ufrpe.br}
\author{Giovanna Ilardi*}
\address{*Dipartimento Matematica ed Applicazioni ``Renato Caccioppoli'', Universit\`a Degli Studi Di Napoli ``Federico II'', Via Cinthia - Complesso Universitario Di Monte S. Angelo 80126 - Napoli - Italia}
\email{giovanna.ilardi@unina.it}
\author{Giuseppe Zappal\`a***}
\address{***Dipartimento di Matematica e Informatica,  Universit\`a di Ca\-tania,
              	Viale A. Doria 6, 95125 Catania, Italy}
\email{zappalag@dmi.unict.it}
\begin{document}
\subjclass[2000]{Primary 13D10; Secondary 13H10} \keywords{Hilbert function, Non unimodal h-vector, full Perazzo algebra, Turan algebra.}

\begin{abstract}

We conjecture that a class of Artinian Gorenstein Hilbert algebras called full Perazzo algebras always have minimal Hilbert function, fixing codimension and length. We prove the conjecture in length four and five, in low codimension. We also prove the conjecture for a particular subclass of algebras that occurs in every length and  certain codimensions. As a consequence of our methods we give a new proof of part of a known result about the asymptotic behavior of the minimum entry of a Gorenstein Hilbert function. 	 
   
\end{abstract}

\maketitle

\section*{Introduction}

Gorenstein algebras appear as cohomology rings in several categories. For instance, real orientable manifolds, projective varieties, Kahler manifolds, convex polytopes, matroids,  Coxeter groups and tropical varieties are examples of categories for which the ring of cohomology is an Artinian Gorenstein $\K$-algebra. The fundamental point is that these algebras can be characterized as algebras satisfying Poincar\'e duality, see \cite{MW}.

We deal with standard graded Artinian Gorenstein $\K$-algebras over a field of characteristic zero. A natural and classical problem consists in understanding their possible Hilbert function, sometimes also called Hilbert vector. When the codimension of the algebra is less than or equal to $3$, all possible Hilbert vectors were characterized in \cite{St}; in particular, they are unimodal, i.e. they never strictly increase after a strict decrease.
While it is known that non unimodal Gorenstein $h$-vectors exist in every codimension greater than or equal to $5$ (see \cite{BI, BO, BL}),
it is open whether non unimodal Gorenstein $h$-vectors of codimension $4$ exist.
For algebras with codimension $4$ having small initial degree the Hilbert vector is unimodal (see \cite{SS, MNZ3}).
\par

Consider the family $\cal{AG}_{\K}(r,d)$ of standard graded Artinian Gorenstein $\bf{\K}$-algebras of socle degree $d$ and codimension $r$.  
By Poncar\'e duality, the Hilbert function of $ A\in \cal{AG}_{\K}(r,d)$ is a symmetric vector $\Hilb(A)= (1,r,h_2, \ldots, h_{d-2}, r, 1)$, that is $h_k=h_{d-k}$.
There is a natural partial order in this family given by:
	\[(1,r,h_2,\dots, h_{d-2}, r,1) \preceq(1,r, \tilde{h}_2,\dots, \tilde{h}_{d-2}, r,1), \]  
  if $h_i \leq \tilde {h}_i$, for all $i \in \{2, \ldots, d-2\}$. The maximal Hilbert functions are associated to compressed algebras and completely described in \cite{IK}. In fact the Hilbert vector of a compressed Gorenstein algebra is a maximum in $\cal{AG}_{\K}(r,d)$.
  On the other hand, classifying minimal Hilbert functions is a hard problem. We do not know in general if there is a minimum.
  Moreover, given two comparable Gorenstein Hilbert functions, it is not true that any symmetric vector between them is Gorenstein. Some partial results in this direction were obtained in \cite{Za2} and called the interval conjecture.\par

The first example of a non-unimodal Gorenstein $h$-vector was given by Stanley (see \cite[Example 4.3]{St}). He showed that the $h$-vector $(1,13,12,13,1)$ is indeed a
Gorenstein $h$-vector. In \cite{MZa} the authors showed that Stanley's example is optimal, i.e. if we consider the $h-$vector $(1,12,11,12,1)$, it is not Gorenstein.
We say that a vector is totally non unimodal if
$$h_1>h_2>\ldots>h_k\ \text{for}\ k=\lfloor d/2 \rfloor. $$
 A totally non-unimodal Gorenstein Hilbert vector exists for every socle degree $d\geq 4$ when the codimension $r$ is large enough. It is related to a conjecture posed by Stanley and proved in \cite{MNZ, MNZ2} and also a consequence of our Proposition \ref{prop:hilb_perazzo}, see Corollary \ref{cor:totnonunimodal}. \par

From Macaulay-Matlis duality, every standard graded Artinian Gorenstein $\K$-algebra can be presented by a quotient of a ring of differential operators by a homogeneous ideal
that is the annihilator of a single form in the dual ring of polynomials. full Perazzo algebras are associated with full Perazzo polynomials, they are the family that we will study in detail.
Perazzo polynomials are related to Gordan and Noether theory of forms with vanishing Hessian (see \cite[Chapter 7]{Ru} and \cite{G}). In \cite{G} the author introduced the terminology Perazzo algebras to denote the Artinian Gorenstein algebra associated to a Perazzo polynomial. In \cite{FMM, AAdPFIMN} the authors study the Hilbert vector and the Lefschetz properties for Perazzo algebras in codimension $5$. In \cite{CGIZ} the authors study full Perazzo algebras focusing on socle degree $4$, showing that they have minimal Hilbert vector in some cases. In this paper we deal with codimension greater than $13$ and we are more interested in full Perazzo algebras. In the case of socle degree $4$ we recall the known results.  \par

We now describe the contents of the paper in more detail. In the first section we recall the basics on Macaulay-Matlis duality, see \ref{G=ANNF}. In the next subsection we recall the classical bounds for Hilbert functions given by Macaulay,
Gotzman and Green summarized in \ref{GGM}.\par

In the second section we recall the definition of full Perazzo algebras and we pose the full Perazzo Conjecture (see Conjecture \ref{conj:weak}). A full Perazzo polynomial of type $m$ and degree $d$ is a bigraded polynomial of bidegree $(1,d-1)$ given by $f=\sum x_jM_j$ where $\{M_j|j =1, \ldots, \binom{m+d-2}{d-1}\}$ is a basis for $\K[u_1, \ldots, u_m]_{(d-1)}$. The associated Artinian Gorenstein algebra is called full Perazzo algebra (see Section $2$ for more details).

 \begin{conj*} Let $H$ be the Hilbert vector of a full Perazzo algebra of type $m\geq 3$ and socle degree $d\geq 4$ and let $r=r(m,d)$ its codimension.
 Then $H$ is minimal in the family of Hilbert vectors of Artinian Gorenstein algebras of codimension $r$ and socle degree $d$, that is, if $\hat{H}$ is a comparable Artinian Gorenstein Hilbert vector such that $\hat{H} \preceq H$, then $\hat{H}=H$.
  \end{conj*}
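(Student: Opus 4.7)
The plan is to proceed in two main phases. First, I would compute the Hilbert function of the full Perazzo algebra explicitly via Macaulay--Matlis duality, exploiting the bidegree $(1,d-1)$ structure of $f=\sum_j x_j M_j$. Since $f$ is linear in the $x$-variables, any partial derivative of $f$ is either a pure polynomial in the $u$-variables (when at least one $x$-derivative is applied) or retains exactly one $x$-factor (when only $u$-derivatives are applied). This yields a clean two-part decomposition
\[
h_i \;=\; \binom{m+d-1-i}{d-1-i} \;+\; \binom{m+i-2}{i-1},
\]
or a suitable variant thereof, depending on whether $i<d/2$ or $i=d/2$. This is essentially the content of Proposition~\ref{prop:hilb_perazzo} cited in the introduction, which I take as a starting point.

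Next, suppose $\hat H \preceq H$ is a Gorenstein Hilbert vector with $\hat h_i < h_i$ for some index $i$ with $2\leq i\leq d-2$. The strategy is to apply the Macaulay--Gotzmann--Green bounds (summarized in \ref{GGM}) to propagate this strict inequality. The Macaulay upper bound $\hat h_{i+1}\leq \hat h_i^{\langle i\rangle}$ and Green's hyperplane restriction constrain how $\hat h_{i+1}$ can compare to $h_{i+1}$, while the symmetry $\hat h_{d-i}=\hat h_i$ imposed by the Gorenstein condition lets us run the argument from both ends of the vector simultaneously. The crucial combinatorial step is to verify that the full Perazzo values $h_i$ sit exactly on the extremal boundary of these bounds: any strict decrease at position $i$ forces a strict decrease at a neighbouring position. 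Iterating this propagation from degree $i$ toward degree $1$, one reaches $\hat h_1<r$, contradicting the hypothesis that $\hat H$ has codimension $r=r(m,d)$.

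The main obstacle is precisely the combinatorial verification that the full Perazzo values lie on the extremal boundary of the Macaulay--Green inequalities. For small socle degree, say $d=4$ or $5$, and small type $m$, this can be checked by direct computation of the relevant binomials, which accounts for the special cases announced in the abstract. A uniform argument for all $(m,d)$ with $m\geq 3$ and $d\geq 4$ appears to require either a reformulation of Green's theorem tailored to the bihomogeneous structure of $f$, or an inductive reduction to a distinguished subfamily for which the extremal equality is transparent (the subclass mentioned in the abstract likely plays this role).

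In parallel, for cases where a direct boundary analysis is not tight enough, I would attempt a Macaulay inverse-system argument: assuming $\hat A \in \mathcal{AG}_\K(r,d)$ has Hilbert function $\hat H\prec H$, reconstruct a dual generator $g$ such that $\hat A\cong Q/\ann(g)$, and show that any such $g$ in the relevant degree must already contain a full Perazzo polynomial in its orbit closure, forcing $\hat H=H$. This geometric viewpoint is natural given the paper's emphasis on polynomials with vanishing Hessian, but turning it into a rigorous argument is precisely where I expect the difficulty to concentrate.
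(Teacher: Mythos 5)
The statement you are addressing is Conjecture~\ref{conj:weak}, which the paper itself does \emph{not} prove; it remains open in general. The paper establishes only partial cases: socle degree~$4$ in low codimension (Theorem~\ref{ventiq}, Corollary~\ref{vintecinco}, Proposition~\ref{twocases}), socle degree~$5$ for $m\in\{3,\dots,10\}$ (Theorem~\ref{socfive}), and type $m=3$ for every socle degree $d\geq 4$ (Theorem~\ref{thm:main}). You correctly flag that a uniform argument is the missing ingredient, so your sketch is not, and cannot be, a complete proof of the stated conjecture.

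On methodology, your first strategy --- applying the Macaulay/Green bounds to a hypothetical $\hat{H}\prec H$ and deriving a contradiction --- is indeed the engine of the paper's partial results, but the specific mechanism you propose does not match the paper's and in fact has a gap. You suggest iterating a strict inequality from the interior degree down to degree~$1$ to obtain $\hat{h}_1<r$. This cannot work as stated: Macaulay's theorem and Green's theorem give one-sided (upper) bounds, so a strict decrease at some $h_k$ does not by itself force a strict decrease at $h_{k-1}$; moreover $\hat{h}_1=r$ is already forced by the definition of codimension, so that endpoint offers nothing to refute. The paper's actual argument (Theorem~\ref{thm:main}, type $m=3$) takes a generic linear form $L$, uses the exact sequence $0\to Q/(I\!:\!L)(-1)\to Q/I\to S/\overline{I}\to 0$, applies Green's theorem at the degree $d-k+1$ piece of $S/\overline{I}$, controls the resulting binomial tail with the technical Lemma~\ref{tec} (which bounds $\bigl(\bigl(\tbinom{k+1}{2}\bigr)_{(d-k)}\bigr)^{-1}_{0}$), and then applies Macaulay's bound $d-2k+1$ times to obtain a numerical contradiction such as $d\le d-1$ \emph{at a single degree}, not a propagation to degree~$1$. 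The socle-degree $4$ and $5$ cases run through the analogous Lemma~\ref{gors} and Lemma~\ref{gorf}, which compress the Green-then-Macaulay step into one binomial inequality checked explicitly. Your claim that the full Perazzo values ``sit exactly on the extremal boundary'' is also not what the proofs show; the contradictions obtained are strict gaps, not borderline equality cases.

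Your second, parallel strategy --- reconstructing a dual generator $g$ for a hypothetical $\hat{A}$ with $\Hilb(\hat{A})=\hat{H}\prec H$ and arguing that $g$ must have a full Perazzo polynomial in its orbit closure --- has no counterpart in the paper and I do not see how to make it rigorous. The Hilbert function of $Q/\ann(g)$ does not behave monotonically under specialization of $g$ in a way that would force $\hat{H}=H$, and membership in the Gorenstein locus with a prescribed $h$-vector is not an orbit-closure condition. As written this is a heuristic, not an argument, and you would need a substantially sharper statement before it becomes a viable route.
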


In the third section we prove special cases of the Conjecture in socle degree $4$ and we try to fill the gaps in order to classify all possible Hilbert functions
up to codimension $25$ (see Theorem \ref{ventiq}, Corollary \ref{vintecinco} and Proposition \ref{twocases}).
In socle degree $5$ we prove the Conjecture for $m \in \{3,4,5,6,7,8,9,10\}$ (see Theorem \ref{socfive}) and a stronger version of the conjecture for $m = 3$ (see Corollary \ref{socfstrong}).\par

In the fourth section we prove our main result that the full Perazzo Conjecture is true for arbitrary socle degree $d \geq 4$ and type $m=3$.

\begin{theorem*} Every full Perazzo algebra with socle degree $d \geq 4$ of type $m=3$ has minimal Hilbert function.

\end{theorem*}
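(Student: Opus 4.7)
The plan is to combine the explicit Hilbert vector of $A$ established in Proposition \ref{prop:hilb_perazzo}, the symmetry of AG Hilbert vectors, and Macaulay-type combinatorial bounds to rule out any strictly smaller comparable AG Hilbert vector.

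By Proposition \ref{prop:hilb_perazzo}, the full Perazzo algebra of type $m=3$ and socle degree $d$ has Hilbert vector $H=(1,h_1,\dots,h_{d-1},1)$ with
\[
h_k=\binom{k+2}{2}+\binom{d-k+2}{2},\qquad 1\le k\le d-1,
\]
and codimension $r=3+\binom{d+1}{2}$. The two binomial terms reflect the bidegree decomposition of the $k$-th partial derivatives of $f=\sum_j x_j M_j$: the $x$-degree-$1$ derivatives contribute $\binom{k+2}{2}$ independent elements (those of the form $\partial^{|\beta|}f/\partial u^\beta$, $|\beta|=k$), while the $x$-degree-$0$ derivatives span all of $\K[u_1,u_2,u_3]_{d-k}$, of dimension $\binom{d-k+2}{2}$. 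This decomposition is what the minimality argument must exploit.

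Suppose for contradiction that $\hat H=(1,\hat h_1,\dots,\hat h_{d-1},1)$ is an AG Hilbert vector of socle degree $d$ with $\hat H\preceq H$ and $\hat H\ne H$. By the symmetry $\hat h_k=\hat h_{d-k}$ it suffices to show $\hat h_k=h_k$ for $0\le k\le\lfloor d/2\rfloor$. Let $k_0$ be the smallest index in this range with $\hat h_{k_0}<h_{k_0}$. The core argument combines Macaulay's growth bound $\hat h_{k+1}\le\hat h_k^{\langle k\rangle}$ with the AG symmetry: propagate the strict inequality at $k_0$ forward through the first half, transfer it by symmetry to the second half, and derive a contradiction with $\hat h_{d-1}=\hat h_1\le h_1=r$. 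An alternative route is induction on $d$, taking the cases $d=4,5$ proved in Section 3 as the base and reducing $d\ge 6$ to a lower socle degree by a generic derivation $\partial_\ell$ of the dual form, which produces an AG algebra $R/\ann(\partial_\ell\hat F)$ of socle degree $d-1$ whose Hilbert vector can be compared against a Perazzo of socle degree $d-1$.

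The main obstacle is that Macaulay's bound is not tight for the Perazzo Hilbert function; already at $d=4$, $k=2$ one has $h_2=\binom{5}{2}+\binom{2}{1}=12$, hence $h_2^{\langle 2\rangle}=\binom{6}{3}+\binom{3}{2}=23$, while $h_3=13$, so Macaulay alone cannot propagate even a single decrement. To close this gap a sharper tool is required, most plausibly Green's hyperplane restriction theorem applied to the standard graded algebra underlying $\hat A$, or a Gorenstein-specific refinement exploiting Poincar\'e duality on $\hat A$. An accompanying difficulty, in the inductive approach, is establishing the precise relation between the Hilbert vector of $\hat A$ and that of its generic derivative, and matching it to the Perazzo pattern for socle degree $d-1$; making this matching rigorous across all middle degrees is where the bidegree decomposition of $H$ becomes essential.
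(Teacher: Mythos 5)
The proposal correctly computes the full Perazzo Hilbert vector for $m=3$, correctly identifies that pure Macaulay propagation of a decrement is hopeless (the slack $h_k^{\langle k\rangle}-h_{k+1}$ is large), and correctly guesses that Green's hyperplane restriction theorem is the missing ingredient. But at that point you stop and flag the remaining work as an open difficulty rather than carrying it out, so there is a real gap: you have a direction, not a proof.

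What is missing, concretely, is the mechanism by which Green's theorem produces a contradiction. The paper does not try to propagate $\hat h_{k_0}<h_{k_0}$ forward to degree $d-1$ inside $\hat H$ (which, as you note, cannot work). Instead it passes to a generic hyperplane section: writing $\hat A=Q/I$ and $L$ generic, the short exact sequence $0\to Q/(I\colon L)(-1)\to Q/I\to S/\overline I\to 0$, together with symmetry of the Gorenstein quotient $Q/(I\colon L)$, forces the entries of $\Hilb(S/\overline I)$ to satisfy two opposing constraints. Green's theorem bounds $h'_{d-k+1}$ from above; the crucial point is that this bound is sharp enough only because of a separate combinatorial lemma (Lemma~\ref{tec}) showing $\bigl(\bigl(\binom{k+1}{2}\bigr)_{(d-k)}\bigr)^{-1}_{0}\le k-2$, which yields $h'_{d-k+1}\le d$. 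Feeding in $\hat h_k\le h_k-1$ then forces $h'_k\le 2k-2$, and iterating Macaulay's theorem $d-2k+1$ times on the small quantity $h'_k$ (not on $\hat h_k$) pushes $h'_{d-k+1}$ below its already-established value, giving the contradiction. Neither the statement nor the proof of the technical binomial lemma, nor the idea of running Green and Macaulay against each other \emph{inside the Artinian reduction} $S/\overline I$, appears in your sketch; these are the actual content of the theorem. Your alternative induction-on-$d$ route via a generic derivative is plausible in spirit but you yourself identify that you cannot control the resulting Hilbert vector, and the paper does not pursue it.
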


In the last section we give a new proof of part of a result originally proved in \cite{MNZ2}, concerning the asymptotic behavior of the minimum entry of a Gorenstein Hilbert function (see Theorem \ref{thm:MNZ}).  

\section{Preliminaries}

Most of the background material presented here can be found in \cite{CGIZ}.

\subsection{Macaulay-Matlis duality}

In this section we recall some basic results from Macaulay-Matlis duality for Artinian Gorenstein algebras over a field $\K$ of characteristic zero.
We recall that in characteristic zero we can use a differential version of Macaulay-Matlis duality.

Let $A=\K[X_1,\ldots,X_n]/I = \bigoplus_{i=0}^d A_i$ be a standard graded Artinian $\K-$algebra with $A_d \neq 0$.
The Hilbert function of $A$ can be described by the vector $\Hilb(A) = (1, h_1, \ldots, h_d)$ where $h_i=\dim A_i$.
We say that $A$ is Gorenstein if $\dim A_d =1$ and for every $i =1, \ldots, d$, the natural pairing given by multiplication $A_i \times A_{d-i} \to A_d \simeq \K $ is perfect.
There is an isomorphism $A_i^* \simeq A_{d-i}$.
In this context, $d$ is the socle degree of the algebra and assuming $I_1=0$, $n$ is the codimension of $A$.  \par

Let us regard the polynomial algebra $R=\K[x_1, \ldots, x_n]$ as a module over the algebra $Q=\K[X_1, . . . ,X_n]$ via the identification $X_i = \partial/\partial x_i.$ If $f\in R$ we set
 \[\ann_Q(f)=\{\alpha = p(X_1,\ldots,X_n)\in Q\mid \alpha(f):= p(\partial/\partial x_1,\ldots,\partial/\partial x_n)f=0\}.\]
 More generally, given any $Q$ submodule $M$ of $R$ we define the ideal of $Q$:
 \[\ann_Q(M)=\{\alpha\in Q\mid \alpha(f)=0 \ \text{for all} \ f \in M\}.\]
 On the other side we have the notion of inverse system. Given $I \subset Q$ be an ideal, we define the inverse system $I^{-1}$ which is a $Q$ submodule of $R$:
 \[I^{-1} = \{f \in R \mid \alpha(f) =0  \  \text{for all} \ \alpha \in I\}.\]

 By Macaulay-Matlis duality we have a bijection:

$$\begin{array}{ccc}
\{\text{Homogeneous ideals of} \ Q\} & \leftrightarrow & \{\text{Graded}\ Q\ \text{submodules} \ \text{of} \ R\}\\
\operatorname{Ann}_Q(M) & \leftarrow & M\\
I & \rightarrow & I^{-1}
  \end{array}
$$
 

From the Theory of Inverse Systems, we get the following characterization of standard
Artinian Gorenstein graded $\mathbb K$-algebras. A proof of this result can be found in \cite[Theorem 2.1]{MW}.

\begin{thm}{\bf (Double annihilator Theorem of Macaulay)} \label{G=ANNF} \\
Let $R = \K[x_1,\ldots,x_n]$ and let $Q = \K[X_1,\ldots, X_n]$ be the ring of differential operators.
Let $A= \bigoplus_{i=0}^dA_i = Q/I$ be an Artinian standard graded $\K$-algebra. Then
$A$ is Gorenstein if and only if there exists $f\in R_d$
such that $A\simeq Q/\operatorname{Ann}(f)$.
\end{thm}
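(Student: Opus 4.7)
The plan is to establish both directions via the apolarity pairing between $Q$ and $R$ induced by partial differentiation. The key preliminary fact, verifiable on monomial bases (with $X^\lambda(x^\mu)=\lambda!\,\delta_{\lambda,\mu}$), is that the restricted pairing $Q_d\times R_d\to\K$ is perfect; this translates into the statement that for any nonzero $g\in R_j$ there exists $\gamma\in Q_j$ with $\gamma(g)\neq 0$. Throughout I use the evaluation map $\mu_f\colon Q\to R$, $\alpha\mapsto\alpha(f)$, which is $Q$-linear but reverses degrees by $d$ (sending $Q_i$ into $R_{d-i}$), with $\ker\mu_f=\ann(f)$ and image $Q\cdot f$.

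For the ``if'' direction, starting from $I=\ann(f)$, I would first deduce $\dim A_d=1$ because the perfect degree-$d$ pairing gives $(Q\cdot f)_0\simeq\K$ and hence $A_d\simeq(Q\cdot f)_0\simeq\K$ via $\mu_f$. To verify the Gorenstein pairing, pick $\bar\alpha\in A_i$ nonzero, set $g=\alpha(f)\in R_{d-i}$ (which is nonzero precisely because $\alpha\notin\ann(f)=I$), and reapply the perfect-pairing fact in degree $d-i$ to produce $\beta\in Q_{d-i}$ with $(\alpha\beta)(f)=\beta(g)\neq 0$. This witnesses non-degeneracy and finishes the direction.

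For the ``only if'' direction, I would construct $f$ using the perfect pairing $Q_d\times R_d\to\K$: the codimension-one subspace $I_d\subset Q_d$ has a line as orthogonal complement, and I choose $f\in R_d$ spanning it. By construction $f\in I^{-1}$, so $I\subseteq\ann(f)$, and the real content is the reverse inclusion degree by degree. My plan is a contrapositive: if $\bar\alpha\in A_i$ is nonzero, the Gorenstein pairing on $A$ yields $\bar\beta\in A_{d-i}$ with $\overline{\alpha\beta}\neq 0$ in $A_d$. The main obstacle, and the only genuinely delicate step, is transferring this non-vanishing in $A_d$ to non-vanishing of $(\alpha\beta)(f)$; this requires identifying the abstract isomorphism $A_d\simeq\K$ with the concrete functional $\bar\gamma\mapsto\gamma(f)$. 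Since both $I_d$ and $\ann(f)_d$ have codimension one in $Q_d$ and $I_d\subseteq\ann(f)_d$, they coincide, so $\gamma\mapsto\gamma(f)$ factors through a nonzero map $A_d\to\K$. From there $(\alpha\beta)(f)=\beta(\alpha(f))\neq 0$ forces $\alpha(f)\neq 0$, i.e.\ $\alpha\notin\ann(f)$, completing the proof. Everything outside of this degree-$d$ bridging step is routine linear algebra and bookkeeping of the degree reversal.
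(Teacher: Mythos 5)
The paper does not prove this theorem; it refers the reader to \cite[Theorem 2.1]{MW}, so there is no in-text argument to compare against. Your proposal is the standard apolarity argument and is sound.

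One point deserves a bit more care than your outline gives it. Having chosen $f\in R_d$ spanning the orthogonal complement of $I_d$, you write ``by construction $f\in I^{-1}$, so $I\subseteq\ann(f)$.'' Orthogonality to $I_d$ only says directly that $\alpha(f)=0$ for $\alpha\in I_d$. To upgrade this to $f\in I^{-1}$, i.e.\ $\alpha(f)=0$ for every homogeneous $\alpha\in I_i$ with $i<d$ as well, one must invoke the ideal property: for any $\beta\in Q_{d-i}$ one has $\beta\alpha\in I_d$, hence $\beta(\alpha(f))=(\beta\alpha)(f)=0$, and perfection of the pairing $Q_{d-i}\times R_{d-i}\to\K$ then forces $\alpha(f)=0$. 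This is the same move you use for the reverse inclusion, so it is entirely within your toolkit, but it is not automatic ``by construction.'' With that line supplied, the rest --- the identification $A_d\simeq\K$ via $\bar\gamma\mapsto\gamma(f)$ after noting $I_d=\ann(f)_d$, the contrapositive using the Gorenstein pairing, and the ``if'' direction via the evaluation map $\mu_f$ --- is correct.
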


In the sequel we always assume that $\operatorname{char}(\K)=0$, $A=Q/I$, $I=\ann_Q(f)$ and $I_1=0$.

We will deal with standard bigraded Artinian Gorenstein algebras $A=\bigoplus_{i=0}^d A_i,$ $A_d\ne 0$,
with $A_k=\bigoplus_{i=0}^k A_{(i,k-i)}$, $A_{(d_1,d_2)}\ne 0$ for some $d_1,d_2$ such that $d_1+d_2=d$, we call $(d_1,d_2)$ the socle bidegree of $A$. Since $A^*_k \simeq A_{d-k} $ and since duality is compatible with
direct sum, we get $A_{(i,j)}^* \simeq A_{(d_1-i,d_2-j)}$. \\
Let $R=\K[x_1,\ldots,x_n,u_1,\ldots,u_m]$ be the polynomial ring viewed as standard bigraded ring in the sets of variables $\{x_1,\ldots,x_n\}$ and $\{u_1,\ldots,u_m\}$
and let $Q=\K[X_1,\ldots,X_n,U_1,\ldots,U_m]$ be the associated ring of differential operators.

We want to stress that the bijection given by Macaulay-Matlis duality preserves bigrading, that is, there is a bijection:

$$\begin{array}{ccc}
\{\text{Bihomogeneous ideals of} \ Q\} & \leftrightarrow & \{\text{Bigraded}\ Q\ \text{submodules} \ \text{of}\ R\}\\
\operatorname{Ann}_Q(M) & \leftarrow & M\\
I & \rightarrow & I^{-1}
  \end{array}
$$

 If $f \in R_{(d_1,d_2)}$ is a bihomogeneous polynomial of total degree $d=d_1+d_2$,
then $I = \ann_Q(f) \subset Q$ is a bihomogeneous ideal and $A = Q/I$ is a standard bigraded Artinian Gorenstein algebra of socle bidegree $(d_1,d_2)$ and codimension $r=m+n$ if we assume, without loss of generality, that $I_1=0$.

\begin{rmk}\rm \label{defin:bigraded1}
 With the previous  notation, all bihomogeneous polynomials of bidegree $(1,d-1)$ can be written in the form
 $$f= x_1g_1+\dotsm+x_ng_n,$$
 where $g_i \in \K[u_1,\ldots,u_m]_{d-1}$. The associated algebra, $A = Q/\ann_Q(f)$, is bigraded, has socle bidegree $(1,d-1)$ and we assume that $I_1=0,$ so $\codim A = m+n$.
\end{rmk}

\subsection{Classical Bounds of Hilbert function}

We recall some classical bounds for the growth of the  Hilbert function of Artinian $\K-$algebras. The three main results are due to Macaulay, Gotzmann and Green; before stating them, we need to recall the following definition:
\begin{defin}
Let $k$ and $i$ be positive integers. The $i-$binomial expansion of $k$, denoted by $k_{(i)},$ is
\begin{equation}\label{eq:bin} k=k_{(i)}=\binom{k_i}{i}+\binom{k_{i-1}}{i-1}+\cdots+\binom{k_{j}}{j}\end{equation} where $k_i>k_{i-1}>\ldots>k_j\geq j\geq 1$.\end{defin}
An expansion of type (\ref{eq:bin}) always exists and is unique (see, e.g., \cite[Lemma 4.2.6]{BH}). Following \cite{BH}, we define for any integers $a$ and $b$, $$(k_{(i)})^b_a=\binom{k_i+b}{i+a}+\binom{k_{i-1}+b}{i-1+a}+\cdots+\binom{k_{j}+b}{j+a}$$
where we set $\binom{s}{c}=0$ whenever $s<c$ or $c<0$.

\begin{thm}\label{GGM}
Let $A=R/I$ be a standard graded $\K$-algebra, and $L\in A$ a general linear form (according to the Zariski topology). Denote by $h_d$ the degree $d$ entry of the Hilbert function of $A$ and by $h'_d$ the degree $d$ entry of the Hilbert function of $A/(L)$. Then:
\begin{description}
\item[(Macaulay)]
$$h_{d+1}\leq ((h_d)_{(d)})^{+1}_{+1}.$$
\item[(Gotzmann)] If $h_{d+1}=((h_d)_{(d)})^{+1}_{+1}$ and $I$ is generated in degrees $\leq d+1$, then $$h_{d+s}= ((h_d)_{(d)})^s_s\mbox{ for all } s\geq 1.$$
\item[(Green)] $$h'_d\leq  ((h_d)_{(d)})^{-1}_0.$$
\end{description}
\end{thm}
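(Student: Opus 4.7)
The plan is to reduce all three bounds to a combinatorial analysis of lex-segment monomial ideals. Since $\operatorname{char}(\K)=0$, one may replace $I$ by its generic initial ideal $\operatorname{gin}(I)$ with respect to the reverse-lex order, which is a Borel-fixed monomial ideal with the same Hilbert function as $I$. A compression argument, or equivalently the classical realizability theorem of Macaulay, then shows that among all graded ideals of $R$ with prescribed Hilbert function $(1,h_1,\dots,h_d)$ in degrees $\leq d$, the lex-segment ideal $L$ has the largest possible $\dim L_{d+1}$ and, after a generic linear section, the smallest possible quotient in degree $d$. It therefore suffices to establish each inequality for $L$, where everything is governed by the $i$-binomial expansion of $h_d$.

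First I would handle the Macaulay bound by writing $h_d=\binom{k_i}{i}+\binom{k_{i-1}}{i-1}+\cdots+\binom{k_j}{j}$ and stratifying the lex-smallest $h_d$ monomials of degree $d$ in $R/L$ according to their leading variable; Pascal's rule then gives the count $((h_d)_{(d)})^{+1}_{+1}$ in degree $d+1$ directly. For Gotzmann persistence, the hypothesis that equality holds at degree $d+1$ and that $I$ is generated in degrees $\leq d+1$ forces the lex-segment extension $L$ to acquire no new minimal generator beyond degree $d+1$; iterating the previous lex computation then yields the closed form $h_{d+s}=((h_d)_{(d)})^{s}_{s}$ by induction on $s$, using the binomial identity that applying the Macaulay operator once to a maximal-growth expansion preserves maximal growth. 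For Green's bound, a generic linear form $\ell$ may, after a coordinate change, be assumed equal to $X_n$; by semicontinuity $\dim(A/(\ell))_d$ is bounded above by the corresponding dimension for the lex-segment ideal, and deleting the monomials divisible by $X_n$ from the complement of $L_d$ accounts exactly for the subscript shift, giving the count $((h_d)_{(d)})^{-1}_0$.

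The main obstacle is the extremality statement for lex-segment ideals that underlies all three reductions: one must show that, once the Hilbert function is fixed in degrees $\leq d$, the lex ideal is simultaneously the largest possible in degree $d+1$ and yields the smallest possible generic hyperplane section in degree $d$. This is the technical heart of the proof and is established either via Bayer's theorem on Borel-fixed gins together with an induction on the number of variables, or through Macaulay's original compression procedure. Once this extremality is in hand, all three inequalities become elementary manipulations of the $i$-binomial expansion.
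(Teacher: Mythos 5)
The paper does not actually prove this theorem: it is a statement of three classical results, and the ``proof'' in the paper consists solely of pointers to \cite{BH}, \cite{Go}, and \cite{Gr}. You have instead attempted a genuine proof sketch, so the routes are necessarily different. Your overall strategy (pass to the reverse-lex generic initial ideal, then reduce to a combinatorial analysis of lex-segment ideals) is indeed the standard way these theorems are proved in textbooks, and you correctly identify the extremality of lex ideals as the technical heart. That part of your assessment is sound.

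However, both extremality claims you state are in the wrong direction, and this is not a cosmetic slip --- it would make the reduction yield the wrong inequality. For Macaulay's bound: since $h_{d+1}\leq ((h_d)_{(d)})^{+1}_{+1}$ is an \emph{upper} bound on the growth of the quotient, the lex-segment ideal must realize the \emph{maximum} possible $h_{d+1}$, i.e.\ it has the \emph{smallest} possible $\dim L_{d+1}$ among ideals with prescribed Hilbert function in degrees $\leq d$, not the largest as you wrote. (A generic ideal with the same $\dim I_d$ typically generates far more in degree $d+1$, so its quotient drops.) Symmetrically, for Green's bound $h'_d\leq ((h_d)_{(d)})^{-1}_0$ to follow from the lex case, the lex ideal must give the \emph{largest} generic hyperplane section in degree $d$, not the smallest; a small example ($n=3$, $d=2$, the Borel-fixed ideal generated by $x_1^2,x_1x_2,x_2^2$ versus the lex ideal $x_1^2,x_1x_2,x_1x_3$) already shows the non-lex Borel ideal gives a \emph{smaller} section. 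With the directions as you wrote them, the comparison against the lex case would give lower bounds, which is not what is asserted.

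Two further gaps: the reduction of Green's theorem to lex ideals is not as immediate as for Macaulay's --- Green's original proof does not go through lex ideals at all, and establishing that among Borel-fixed ideals with fixed Hilbert function the lex ideal maximizes the generic section requires its own argument; and for Gotzmann persistence the hypothesis that $I$ is generated in degrees $\leq d+1$ does not transfer verbatim to $\operatorname{gin}(I)$ or to the lex ideal, so ``the lex-segment extension acquires no new minimal generator'' cannot simply be read off from the hypothesis on $I$. These points would need to be addressed before the sketch could be considered a proof.
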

\begin{proof}
For \emph{Macaulay}, see \cite[Theorem 4.2.10]{BH}. For \emph{Gotzmann}, see \cite[Theorem 4.3.3]{BH} or \cite{Go}. For \emph{Green}, see \cite[Theorem 1]{Gr}.
\end{proof}

\begin{defin}
A sequence of non-negative integers $h=(1,h_1,h_2,\ldots,h_i,\ldots)$ is said to be an $\mathcal{O}$-sequence if it satisfies Macaulay's Theorem (\ref{GGM}) for all $i$.
\end{defin}

Recall that when A is Artinian and Gorenstein, then its Hilbert function is a finite, symmetric $\mathcal{O}$-sequence.

\section{Minimal Gorenstein Hilbert functions}

We recall the construction of full Perazzo algebras, introduced in \cite{CGIZ}.

 \begin{defin}
 Let $\K[x_1,\ldots,x_n,u_1,\ldots,u_m]$ be the polynomial ring in the $n$ variables $x_1,\ldots,x_n$ and in the $m$ variables $u_1,\ldots,u_m.$ A {\bfseries{Perazzo polynomial}}
 is a reduced bihomogeneous polynomial $f\in\K[x_1,\ldots,x_n,u_1,\ldots,u_m]_{(1,d-1)}$, of degree $d,$ of the form
 \begin{equation}\label{eqPerazzo}
 f=\displaystyle{\sum_{i=1}^nx_ig_i}
 \end{equation} with $g_i\in\K[u_1,\ldots,u_m]_{d-1}$, for $i=1,\ldots,n$, linearly independent and algebraically dependent polynomials in the variables $u_1,\ldots,u_m$.
 The associated algebra is called a {\bf Perazzo algebra}, it has codimension $m+n$ and socle degree $d$.  \end{defin}

Now we fix $m\geq 2$ and we consider the $m$ variables $u_1,\ldots,u_m$. For a multi-index $\alpha=(e_1, \ldots, e_m)$ with $e_{1}+\cdots+e_{m}=d-1$, let
 $$M_{\alpha}=u_{1}^{e_{1}}\cdots u_{m}^{e_{m}}\in Q_{d-1}$$  
   be a $\K$-linear basis for $Q_{d-1} $ and denote $\tau_m = \dim Q_{d-1} =\binom{m+d-2}{d-1}$.\par

  \begin{defin}\rm
  Let $f\in \K[x_1,\ldots,x_{\tau_m},u_1,\ldots,u_m]_{(1,d-1)}$ be a Perazzo polynomial  of degree $d$ of form:
\begin{equation}\label{eqcub}
	f=\displaystyle{\sum_{j=1}^{\tau_m} x_jM_j}.
\end{equation}
In this case $f$ is called {\bfseries{full Perazzo polynomial}} of type $m$ and degree $d$. The associated algebra is a {\bf full Perazzo algebra} of socle degree $d$ and codimension $m+ \tau_m$.    
  \end{defin}
 
 \begin{prop}\label{prop:hilb_perazzo}
 Let $A$ be a full Perazzo algebra of type $m\geq 2$ and socle degree $d$. Then for $k=0,\ldots, \lfloor\frac{d}{2}\rfloor$ \[h_k=\dim A_k=\binom{m+k-1}{k}+\binom{m+d-k-1}{d-k}. \]
 In particular, its Hilbert function is totally non-unimodal for $r>>0$.
 \end{prop}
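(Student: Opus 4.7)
The plan is to exploit the bigrading induced by $f\in R_{(1,d-1)}$. Since $f$ has bidegree $(1,d-1)$, one has $A_{(i,j)}=0$ whenever $i>1$ or $j>d-1$, and hence for every $k\ge 1$
$$
A_k \;=\; A_{(0,k)}\oplus A_{(1,k-1)}.
$$
By the bigraded Macaulay--Matlis duality recalled above, $A_{(1,k-1)}^{*}\simeq A_{(0,d-k)}$, so the proposition reduces to computing $\dim A_{(0,k)}$ for $0\le k\le d-1$.

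I would first show that $I_{(0,k)}=0$ in this range. For $\alpha\in Q_{(0,k)}$,
$$
\alpha(f)=\sum_{j=1}^{\tau_m}x_j\,\alpha(M_j),
$$
and, since the $x_j$ are independent variables, this vanishes iff $\alpha(M_j)=0$ for every monomial $M_j$ of degree $d-1$ in the $u_i$. Writing $\alpha=\sum_{|\beta|=k}c_\beta U^{\beta}$ and testing against $u^\gamma$ with $|\gamma|=d-1$ and $\gamma\ge\beta$ componentwise (possible since $k\le d-1$) gives $\sum_{\beta'\le\gamma}c_{\beta'}\tfrac{\gamma!}{(\gamma-\beta')!}u^{\gamma-\beta'}=0$; the monomials $u^{\gamma-\beta'}$ are pairwise distinct, so $c_{\beta'}=0$ for every $\beta'\le\gamma$, and varying $\gamma$ gives $\alpha=0$. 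Hence $\dim A_{(0,k)}=\binom{m+k-1}{k}$ for $0\le k\le d-1$, and combining with the duality above yields, for $1\le k\le d-1$,
$$
h_k=\binom{m+k-1}{k}+\binom{m+d-k-1}{d-k},
$$
which in particular contains the claimed formula in the range $1\le k\le\lfloor d/2\rfloor$.

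For the total non-unimodality, using $\binom{m+k}{k+1}=\tfrac{m+k}{k+1}\binom{m+k-1}{k}$ I would rearrange to get
$$
h_k-h_{k+1}=(m-1)\left[\frac{1}{d-k}\binom{m+d-k-2}{d-k-1}-\frac{1}{k+1}\binom{m+k-1}{k}\right].
$$
For $1\le k\le\lfloor d/2\rfloor-1$ one checks that $d-k-1\ge k+1>k$, so the first summand inside the bracket is a polynomial in $m$ of degree $d-k-1$ with positive leading coefficient while the second has strictly smaller degree in $m$; therefore the bracket is positive for $m$ (equivalently $r=m+\tau_m$) large enough, yielding $h_1>h_2>\cdots>h_{\lfloor d/2\rfloor}$, i.e.\ total non-unimodality.

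No step is conceptually deep: the only mild technical point is the verification that a nonzero operator in $Q_{(0,k)}$, $k\le d-1$, cannot annihilate every monomial of degree $d-1$, which is the direct computation above. Everything else follows from bigraded Macaulay--Matlis duality and a routine asymptotic of binomial coefficients.
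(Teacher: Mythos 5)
Your proof follows essentially the same route as the paper's: decompose $A_k=A_{(0,k)}\oplus A_{(1,k-1)}$ via the bigrading, identify $\dim A_{(0,k)}$ with $\dim Q_{(0,k)}$, and use the bigraded duality $A_{(1,k-1)}^*\simeq A_{(0,d-k)}$ to get the second binomial. You supply two details the paper leaves implicit --- an explicit verification that $I_{(0,k)}=0$ for $k\le d-1$ (the paper simply asserts ``it is clear that $A_{(0,k)}=Q_{(0,k)}$''), and a precise pairwise-difference calculation for $h_k-h_{k+1}$ in the non-unimodality part (the paper only notes $h_k(m)\sim m^{d-k}/(d-k)!$) --- but these are elaborations, not a different argument.
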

 \begin{proof}
  Using the bigrading of $A$ and considering that the polynomial $f$ has degree $1$ in the  variables $x_1,\ldots,x_{\tau_m}$, fixed $k=0,\ldots,\lfloor\frac{d}{2}\rfloor$, we have the following decomposition:
  $$A_k=A_{(0,k)}\oplus A_{(1,k-1)}.$$
  \begin{enumerate}
  \item[(i)] It is clear that $A_{(0,k)}=Q_{(0,k)}$, hence $\dim A_{(0,k)}=\dim Q_{(0,k)}= \binom{m+k-1}{k}$.
  \item[(ii)] We have $A^*_{(1,k-1)}\simeq A_{(0,d-k)}$ and $A_{(0,d-k)}=Q_{(0,d-k)},$ hence $\dim A_{(1,k-1)}=\dim Q_{(0,d-k)}=\binom{m+d-k-1}{d-k}$.
  \end{enumerate}
 To verify that the Hilbert vector is asymptotically totally non unimodal it is enough to see that as a function of $m$, $h_k(m) \simeq \frac{1}{(d-k)!}m^{d-k}$ for $k \leq d/2$.  
 \end{proof}

 \begin{cor} \label{cor:totnonunimodal}
 	For every $d\geq 4$ there is a positive integer $r_0$ such that for all $r\geq r_0$ there is an Artinian Gorenstein algebra with socle degree $d$ and codimension $r$ having a totally non unimodal Hilbert vector.  
 \end{cor}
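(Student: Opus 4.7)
My plan is to leverage Proposition \ref{prop:hilb_perazzo}, which already furnishes totally non-unimodal Hilbert vectors in codimensions of the form $r(m):=m+\tau_m$ for $m$ large, and then to cover the remaining codimensions by adding $d$-th powers of fresh variables to the defining polynomial. First I fix $m_0$ such that for every $m\geq m_0$ the full Perazzo algebra of type $m$ and socle degree $d$, denoted $A_m$, has totally non-unimodal Hilbert vector; such $m_0$ exists by the asymptotic estimate $h_k(m)\sim m^{d-k}/(d-k)!$ recorded in the proof of Proposition \ref{prop:hilb_perazzo}. Set $r_0:=r(m_0)$. It remains to produce, for every $r\geq r_0$, a standard graded Artinian Gorenstein algebra of socle degree $d$, codimension exactly $r$, and totally non-unimodal Hilbert vector.

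Given such an $r$, let $m\geq m_0$ be the unique integer with $r(m)\leq r<r(m+1)$ and put $s:=r-r(m)\geq 0$. Take $f=\sum_{j=1}^{\tau_m}x_jM_j$, the full Perazzo polynomial of type $m$ and socle degree $d$, introduce fresh indeterminates $y_1,\ldots,y_s$, and form
$$F \;=\; f+y_1^d+\cdots+y_s^d.$$
Let $Q'$ be $Q$ adjoined with the differential operators $Y_1,\ldots,Y_s$ and define $A':=Q'/\ann_{Q'}(F)$. By Theorem \ref{G=ANNF}, $A'$ is a standard graded Artinian Gorenstein algebra of socle degree $d$. The crux is to show that $h_k(A')=h_k(A_m)+s$ for $1\leq k\leq d-1$, since then $\codim(A')=r$ and, because adding the same constant $s$ to each middle entry preserves strict decreases, $A'$ is totally non-unimodal.

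The displayed Hilbert function identity reduces to a short annihilator computation for $F$: any monomial in $Q'$ containing both an old differential operator $X_j$ or $U_i$ and a new $Y_\ell$ lies in $\ann_{Q'}(F)$, since the old operators kill each $y_\ell^d$ while the $Y_\ell$ kill $f$; similarly $Y_\ell Y_{\ell'}\in\ann_{Q'}(F)$ for $\ell\neq \ell'$. Hence in each middle degree $k$, evaluation on $F$ splits as a direct sum, yielding an isomorphism $A'_k\simeq (A_m)_k\oplus \gen\{Y_1^k,\ldots,Y_s^k\}$ of $\K$-vector spaces. I foresee no substantial obstacle; the only step requiring care is this decomposition, which follows routinely once the vanishing of the mixed monomials on $F$ is tracked.
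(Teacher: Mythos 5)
Your proposal is correct and follows essentially the same approach as the paper: start from a full Perazzo algebra of type $m$ large enough to be totally non-unimodal, then pad the dual form with $d$-th powers of $s$ fresh variables to shift the middle Hilbert function entries by $s$ and hit every codimension $r\geq r_0$. The only cosmetic difference is that you let $m$ vary with $r$ (choosing the largest $m$ with $r(m)\leq r$) while the paper fixes one $m$ and lets $s$ grow, and you spell out the annihilator splitting that the paper merely asserts as "easy to see."
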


 \begin{proof} Let $m$ be large enough in order to guarantee that the Hilbert vector of the full Perazzo algebra $A=Q/\ann(f)$, of type $m $ and socle degree $d$ has a totally non unimodal Hilbert vector. For every $r > m+ \binom{m+d-2}{d-1}$, let $s=r - [m+ \binom{m+d-2}{d-1}]$ and consider the algebra
 $A'=Q'/\ann(g)$ where $Q'= Q[Y_1, \ldots, Y_s]$ and $g = f + \sum_{i=1}^s Y_i^d$. It is easy to see that the Hilbert vector of $A'$ is
 given by $h'_k = h_k + s$ for $k\neq 0,d$, therefore, it is totally non-unimodal and the result follows.
 \end{proof}

  Let $d \geq 4$, $r\geq 3$. Consider the family $\cal{AG}(r,d)$ of standard graded artinian Gorenstein $\bf{\K}$-algebras of socle degree $d$ and codimension $r$.
  In this section we will consider $\K$, a fixed field of characteristic $0$.
  We know that the Hilbert function of $ A\in \cal{AG}(r,d)$ is a symmetric vector $\Hilb(A)= (1,r,h_2, \ldots, h_{d-2}, r, 1)$, with $h_i=h_{d-i}$ by Poincar\'e duality.
 
  Consider the family of length $d$ symmetric vectors of type
  $(1,r,h_2, \ldots, h_{d-2}, r, 1)$, where $h_i=h_{d-i}$. There is a natural partial order in this family
 
  $$(1,r,h_2,\dots, h_{d-2}, r,1) \preceq (1,r, \tilde{h}_2,\dots, \tilde{h}_{d-2}, r,1). $$
 
  If $h_i \leq \tilde {h}_i$, for all $i \in \{2, \ldots, d-2\}$.
  This order can be restricted to $\cal{AG}(r,d)$ which becomes a {\it poset}.

 \begin{defin} Let $r,d$ be fixed positive integers and let $H$ be a length $d+1$ symmetric vector
  $(1,r,h_2, \ldots, h_{d-2}, r, 1)$.
  We say that $H$ is a  {\bfseries{minimal Artinian Gorenstein Hilbert function}} of socle degree $d$ and codimension $r$ if there is an Artinian Gorenstein algebra such that $\Hilb(A)=H$ and $H$ is minimal
  in $\cal{AG}(r,d)$ with respect to $\preceq$. To be precise, if $\hat{H}$ is a comparable Artinian Gorenstein Hilbert vector such that $\hat{H}\preceq H$, then $\hat{H}=H$.  
  \end{defin}
 
  We now present the full Perazzo Conjecture.
 
  \begin{conj}\label{conj:weak} Let $H$ be the Hilbert vector of a full Perazzo algebra of type $m$ and socle degree $d$. Then $H$ is minimal in $\cal{AG}(r,d)$.
 
  \end{conj}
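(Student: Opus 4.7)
The plan is to prove that for any Artinian Gorenstein algebra $\hat{A}$ with $\Hilb(\hat A)=\hat H \preceq H$ one must have $\hat h_k = h_k$ for every $k$. By Poincar\'e duality it suffices to work in the range $2 \leq k \leq \lfloor d/2 \rfloor$, and by Theorem \ref{G=ANNF} one may write $\hat A = Q'/\ann(\hat f)$ for some degree $d$ polynomial $\hat f$ in $r=m+\tau_m$ variables; then $\hat h_k = \dim \langle \partial^{\alpha}\hat f : |\alpha|=d-k\rangle$.

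The first step is to read the formula of Proposition \ref{prop:hilb_perazzo} as a decomposition $h_k = \binom{m+k-1}{k} + \binom{m+d-k-1}{d-k}$, where the first summand records the polynomial growth in $m$ variables carried by the $u$-part of a full Perazzo polynomial and the second is dual to the corresponding term in degree $d-k$ through the pairing $A_k \times A_{d-k} \to A_d$. The Perazzo values $h_k$ should then be characterized as the minimal symmetric values compatible simultaneously with Macaulay's inequality, Green's hyperplane restriction theorem and the Gorenstein symmetry $\hat h_k = \hat h_{d-k}$, all gathered in Theorem \ref{GGM}.

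The core of the plan is a forward-propagation argument: let $k_0$ be the least index with $\hat h_{k_0} < h_{k_0}$ and apply Macaulay's bound iteratively to $\hat h_{k_0}, \hat h_{k_0+1}, \ldots, \hat h_{d-1}$. The key combinatorial claim to prove is that any value strictly smaller than $h_{k_0}$ grows too slowly under iterated binomial expansion to reach $r = m + \tau_m$ at step $d-1$, contradicting the symmetry constraint $\hat h_{d-1} = \hat h_1 = r$. Gotzmann persistence should then pin down every intermediate entry on the segments where Macaulay's bound is saturated, and Green's theorem applied to a general linear form transfers constraints to $\hat A/(L)$ at the degrees where Macaulay alone is too loose.

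The main obstacle, which is also the reason the paper only settles the conjecture in special cases, is that the classical bounds are typically not tight enough to force $\hat h_k \geq h_k$ across all triples $(m, d, k)$. Closing the gap requires genuine Gorenstein rigidity, presumably either a case analysis of the bigraded shapes that $\hat f$ can take given the codimension hypothesis, or an induction on $d$ via a general Artinian reduction that decreases the socle degree. The treatment of $m=3$ in the paper's main theorem is tractable because the possible shapes of $\hat f$ are very limited; for arbitrary $m$ the polynomial-in-$m$ growth of $\binom{m+k-1}{k}$ produces a combinatorial explosion of derivative patterns, and this is where I expect the real difficulty in an unconditional proof of the conjecture to concentrate.
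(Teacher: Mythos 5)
The statement you were asked to prove is Conjecture \ref{conj:weak}, which the paper leaves open: the authors establish it only in socle degrees $4$ and $5$ in low codimension (Section 3) and for type $m=3$ in arbitrary socle degree (Theorem \ref{thm:main}). You correctly recognize this, and your proposal is an honest outline rather than a proof, so there is no paper proof to compare it against in the usual sense.

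That said, the core ``forward-propagation'' step as written would not work. You propose to apply Macaulay's bound iteratively to $\hat{h}_{k_0}, \hat{h}_{k_0+1},\ldots,\hat{h}_{d-1}$ in $\hat{A}$ itself and argue the result cannot reach $r$ at degree $d-1$. But Macaulay gives an \emph{upper} bound on growth, and since $\Hilb(\hat{A})$ descends from $\hat{h}_{k_0}$ toward the middle degree and then rises back to $r$ by symmetry, the iterated Macaulay bound will comfortably exceed $r$; no contradiction results. The paper's method in all of its partial results (Lemma \ref{gors}, Lemma \ref{gorf}, and especially Theorem \ref{thm:main}) is structurally different: one first passes to $\hat{A}/(L)$ for a general linear form $L$, applies Green's theorem there to cap the entry $h'_{d-k+1}$, reads off $h'_k$ from the exact sequence linking $\hat{A}$, $\hat{A}/(L)$ and the Gorenstein quotient $Q/(I:L)$, and then applies Macaulay $d-2k+1$ times \emph{within} $\hat{A}/(L)$ to show $h'_{d-k+1}$ cannot be as large as Green permits. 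The contradiction lives entirely in the restricted algebra, where the entries are small enough for Macaulay's bound to bite. You mention Green only as a patch ``where Macaulay alone is too loose''; in the paper's argument the Green reduction is the indispensable first move, and without it the Macaulay step has nothing to grip.

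Your closing observation is sound: the reason the paper settles only $m=3$ in all socle degrees (Lemma \ref{tec} supplies the needed binomial estimate there) but only a finite list of $m$ in socle degrees $4$ and $5$ is precisely that the Green--Macaulay combination is not tight enough for general $(m,d)$. An unconditional proof would require a new ingredient, and the conjecture remains open.
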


 \section{Minimal Gorenstein Hilbert functions in low socle degree}

In this section we study Gorenstein Hilbert functions of algebras with socle degree $4$ and $5.$ Part of the results in socle degree $4$ can be found in \cite{CGIZ}.

\subsection{Minimal Gorenstein Hilbert functions in socle degree 4}

In socle degree $4,$ a Gorenstein sequence is of the form
   $$(1,r,h,r,1).$$
Let $\mu(r)$ be the integer such that $(1,r,\mu(r),r,1)$ is a Gorenstein sequence, but $(1,r,\mu(r)-1,r,1)$ is not a Gorenstein sequence. Then $\mu(r) \le h \le \binom{r+1}{2}.$
\par
It is well known that $(1,r,h,r,1)$ is a Gorenstein sequence if and only if $\mu(r) \le h \le \binom{r+1}{2}$ (see \cite{Za2}).  We set $\delta(r)=r-\mu(r).$ This function was introduced in \cite{MNZ} and also studied in \cite{CGIZ}. The function $\delta(r)$ is not decreasing, so $\delta(r) \le \delta(r+1),$ for every $r$ (see \cite{MNZ}, Proposition 8).
\par
By Remark 5.4 in \cite{CGIZ}, if $\delta(r-1) < \delta(r)$ then $\delta(r)=\delta(r-1)+1.$

\begin{defin}
We say that the Gorenstein sequence $(1,r,\mu(r),r,1)$ is {\em minimal}.
Moreover we say that the Gorenstein sequence $(1,r,\mu(r),r,1)$ is {\em strongly minimal} if $\delta(r-1) < \delta(r).$
\end{defin}

By Remark 5.4 in \cite{CGIZ}, if $(1,r,\mu(r),r,1)$ is strongly minimal, then $\delta(r)=\delta(r-1)+1.$
\par
The minimal $r$ such that $(1,r,\mu(r),r,1)$ is not unimodal is $r=13$ (\cite{MZa}). So $\delta(r)=0$ for $r \le 12.$

\begin{prop}\label{delta1}
$\delta(r)=1$ iff $13 \le r \le 19.$
\par
Consequently the sequence $(1,13,12,13,1)$ is strongly minimal.
\end{prop}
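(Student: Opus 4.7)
The plan is to exploit the step-function structure of $\delta$: by the ``+1 at a time'' rule of Remark 5.4 in \cite{CGIZ} together with monotonicity, locating the jumps $0\to 1$ and $1\to 2$ determines $\delta$ everywhere. Since $\delta(r)=0$ for $r\leq 12$ is known from \cite{MZa}, it suffices to show $\delta(13)\geq 1$ and that $20$ is the smallest integer with $\delta\geq 2$.

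For the first jump I would observe that Stanley's example $(1,13,12,13,1)$ is realized by the full Perazzo algebra of type $m=3$ and socle degree $d=4$: applying Proposition \ref{prop:hilb_perazzo} with $m=3$, $d=4$, $k=2$ yields $h_2=2\binom{4}{2}=12$. Hence $\mu(13)\leq 12$, so $\delta(13)\geq 1$, which combined with the ``+1'' rule forces $\delta(13)=1$. Adding pure quartic powers $y_i^4$ to Stanley's polynomial realizes a Gorenstein Hilbert function $(1,13+k,12+k,13+k,1)$ for each $k\geq 0$, so $\delta(r)\geq 1$ for all $r\geq 13$; in particular $\mu(r)\leq r-1$ for $13\leq r\leq 19$.

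Locating the second jump at $r=20$ requires two complementary results: (a) non-existence of $(1,19,17,19,1)$ as a Gorenstein Hilbert sequence (so $\delta(19)\leq 1$, and by monotonicity $\delta(r)=1$ for $13\leq r\leq 19$), and (b) existence of a Gorenstein algebra with Hilbert function $(1,20,18,20,1)$ (so $\delta(20)\geq 2$). For (a), assume such $A=Q/\ann(f)$ exists, pick a generic $\ell\in A_1$, and observe that by Gorenstein duality the maps $\cdot\ell:A_1\to A_2$ and $\cdot\ell:A_2\to A_3$ have a common rank $\rho$, so $A/\ell A$ has Hilbert function $(1,18,17-\rho,19-\rho,0)$. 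Green's theorem applied to $A$ in degrees $2$ and $3$ gives $\rho\geq 10$ (via $((17)_{(2)})^{-1}_0=11$ and $((19)_{(3)})^{-1}_0=9$), while Macaulay's bound $h'_3\leq ((h'_2)_{(2)})^{+1}_{+1}$ inside $A/\ell A$ restricts $\rho$ to $\{10,11,12\}$. For each residual value I would iterate Green--Macaulay on a further quotient $A/(\ell,m)$ by a second generic linear form and combine with the Gorenstein pairing to derive a contradiction.

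For (b), I would exhibit a Perazzo polynomial of type $m=4$, $f=\sum_{i=1}^{16}x_ig_i$, with cubics $g_i\in\K[u_1,\ldots,u_4]_3$ chosen so that $\gen\{\partial g_i/\partial u_j\}$ has codimension $2$ in $\K[u_1,\ldots,u_4]_2$; the bigraded decomposition then gives $h_1=4+16=20$ and $h_2=\binom{5}{2}+8=18$. A concrete candidate takes suitable linear combinations of monomial cubics designed to omit two prescribed quadrics from the image of the partial-derivative map. The main obstacle is part (a): the classical Macaulay and Green bounds applied once do not immediately exclude $(1,19,17,19,1)$, so one must iterate these bounds on a doubly-reduced quotient and combine them with Gorenstein symmetry to derive the contradiction in each borderline case $\rho\in\{10,11,12\}$. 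Once (a) and (b) are established, the final assertion that $(1,13,12,13,1)$ is strongly minimal is immediate from $\delta(12)=0<\delta(13)=1$.
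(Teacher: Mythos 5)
Your overall scaffold matches the paper's: establish $\delta(13)\ge 1$ from Stanley's/the full Perazzo example, $\delta(12)=0$, $\delta(19)\le 1$ (i.e.\ $(1,19,17,19,1)$ is not Gorenstein), $\delta(20)\ge 2$ (i.e.\ $(1,20,18,20,1)$ is Gorenstein), then conclude by monotonicity of $\delta$. However, the paper does not prove the two hard facts (a) and (b); it cites them (\cite{AMS}, Theorem 4.1 for (a); \cite{MZa}, Remark 3.5 for (b)), whereas you attempt to reprove them but do not actually finish, and the route you outline is unlikely to close.

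For (a): your Green/Macaulay bookkeeping is correct up to the conclusion that the general linear section has Hilbert function $(1,18,17-\rho,19-\rho)$ with $\rho\in\{10,11,12\}$. But note that this is precisely what the paper's Lemma \ref{gors} gives, and Lemma \ref{gors} does \emph{not} eliminate $(1,19,17,19,1)$: with $r=19$, $u=2$ one gets $(19_{(3)})^{-1}_0=9$, and $\bigl((9-2)_{(2)}\bigr)^{+1}_{+1}=\binom{5}{3}+\binom{2}{2}=11\ge 9$, so no contradiction. Thus the single-reduction Green--Macaulay argument cannot work here, and you rightly sense an obstacle. The remedy you propose --- ``iterate Green--Macaulay on $A/(\ell,m)$ and combine with Gorenstein pairing'' --- is stated as a plan, not carried out, and there is no reason to expect it suffices: the proof in \cite{AMS} of Theorem 4.1 relies on structurally stronger inputs (analysis of maximal-growth situations via Gotzmann persistence and the geometry of the scheme defined by the truncated ideal), the same kind of argument the present authors sketch elsewhere (e.g.\ in the proof that $\delta(26)=\delta(27)=4$). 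Without that extra machinery, the cases $\rho\in\{10,11,12\}$ are not excluded.

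For (b): the codimension-20, socle-degree-4 Perazzo construction is plausible in spirit, but nothing is verified --- you assert that certain partial-derivative spans have codimension 2 in $\K[u_1,\ldots,u_4]_2$ and that this yields $h_2=18$, but you neither exhibit the $g_i$ nor check the resulting Hilbert function (in particular, that $I_1=0$ and that no low-degree annihilators in the $U$-variables appear). The paper simply appeals to \cite{MZa}, Remark 3.5, where such an example is given. So the two nontrivial inputs are left as gaps; the parts that you do prove (the realizations forcing $\delta(r)\ge 1$ for $13\le r\le 19$, and the deduction of strong minimality from $\delta(12)=0<\delta(13)$) are correct and essentially the same as the paper's.
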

\begin{proof}
The sequence $(1,13,12,13,1)$ is a Gorenstein sequence. This was originally proved by Stanley in \cite{St}.
This sequence is also the Hilbert Function of the full Perazzo algebra with $m=3.$ In \cite{MZa}, Proposition 3.1, it was proved that $(1,12,11,12,1)$ is not a Gorenstein sequence.
Consequently $\delta(12)=0,$ therefore $\delta(r)=0$ for every $r \le 12.$ In \cite{AMS}, Theorem 4.1, was shown that $(1,19,17,19,1)$ is not a Gorenstein sequence,
so $\delta(r)=1$ for $13 \le r \le 19.$ In \cite{MZa}, Remark 3.5, was observed that $(1,20,18,20,1)$ is a Gorenstein sequence, so for $r \ge 20$ we have that $\delta(r)\ge 2.$
\end{proof}

\begin{cor}
$\delta(20)=2.$ Consequently the sequence $(1,20,18,20,1)$ is strongly minimal.
\end{cor}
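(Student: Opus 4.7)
The plan is to assemble this corollary directly from facts already collected in the preceding proposition and the cited remarks, so no new construction is needed. First I would invoke the result of \cite{MZa}, Remark 3.5, mentioned in the proof of Proposition \ref{delta1}, which asserts that $(1,20,18,20,1)$ is a Gorenstein Hilbert vector. By the definition of $\mu(r)$ this immediately gives $\mu(20)\leq 18$, hence $\delta(20)=20-\mu(20)\geq 2$.

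Second, I would use the two structural facts about $\delta$ recalled in this subsection: $\delta$ is non-decreasing (Proposition~8 of \cite{MNZ}), and by Remark~5.4 of \cite{CGIZ}, whenever $\delta$ strictly increases at $r$, it does so by exactly one, i.e. $\delta(r)=\delta(r-1)+1$. Proposition~\ref{delta1} has just established that $\delta(19)=1$. Combining these observations with the lower bound $\delta(20)\geq 2$ already obtained, the strict inequality $\delta(20)>\delta(19)$ holds, and the jump-by-one principle forces $\delta(20)=\delta(19)+1=2$.

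Finally, because $\delta(20)>\delta(19)$, the Hilbert vector $(1,20,\mu(20),20,1)=(1,20,18,20,1)$ satisfies the definition of strongly minimal, yielding the second claim of the corollary at no extra cost.

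There is no real obstacle here: the corollary is a bookkeeping step that packages the existence result of \cite{MZa} together with the monotonicity and jump-by-one properties of $\delta$; all the heavy lifting (the construction of a Gorenstein algebra with Hilbert vector $(1,20,18,20,1)$ and the proof that $\delta$ jumps by at most one) is already available in the literature and in Proposition~\ref{delta1}.
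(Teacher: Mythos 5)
Your proposal is correct and follows essentially the same route as the paper: cite Remark 3.5 of \cite{MZa} to get $\delta(20)\geq 2$, then combine $\delta(19)=1$ (from Proposition \ref{delta1}) with the jump-by-one property from Remark 5.4 of \cite{CGIZ} to pin down $\delta(20)=2$ and conclude strong minimality. You have merely spelled out the steps that the paper's terse two-line proof leaves implicit.
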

\begin{proof}
In \cite{MZa}, Remark 3.5, it was observed that $(1,20,18,20,1)$ is a Gorenstein sequence.
So, by Remark 5.4 in \cite{CGIZ}, $\delta(20)=2.$
\end{proof}

\begin{prop} \label{perq}
Let $m \ge 3.$ We have that
  $$\delta\left(m+\binom{m+2}{3}\right)\ge \binom{m}{3}.$$ 
\end{prop}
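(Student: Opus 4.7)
The plan is to use the full Perazzo algebra of type $m$ and socle degree $d=4$ as a witness, then reduce the claim to a short binomial identity.

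First I would invoke Proposition \ref{prop:hilb_perazzo} with $d=4$ and $k=2$: the middle entry of the Hilbert vector of the full Perazzo algebra of type $m$ is
\[
h_2 \;=\; \binom{m+1}{2} + \binom{m+1}{2} \;=\; m(m+1),
\]
while the codimension is exactly $r = m + \tau_m = m + \binom{m+2}{3}$. So the symmetric sequence
\[
\bigl(1,\; r,\; m(m+1),\; r,\; 1\bigr)
\]
is realized as the Hilbert vector of an Artinian Gorenstein algebra of socle degree $4$. By the very definition of $\mu(r)$, this forces $\mu(r) \leq m(m+1)$, and hence
\[
\delta(r) \;=\; r - \mu(r) \;\geq\; r - m(m+1).
\]

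Next I would verify the clean identity $r - m(m+1) = \binom{m}{3}$ for $m \geq 3$. Writing out
\[
m + \binom{m+2}{3} - m(m+1) \;=\; m + \frac{m(m+1)(m+2)}{6} - m(m+1) \;=\; m + \frac{m(m+1)(m-4)}{6},
\]
a direct expansion of the numerator gives $6m + m(m+1)(m-4) = m(m-1)(m-2) = 6\binom{m}{3}$, so indeed $r - m(m+1) = \binom{m}{3}$. Combining this with the previous inequality yields the desired bound $\delta\bigl(m + \binom{m+2}{3}\bigr) \geq \binom{m}{3}$.

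There is no real obstacle here: the content of the proof is entirely the construction of the full Perazzo algebra as an explicit witness (which has been recorded in Proposition \ref{prop:hilb_perazzo}), and then a one-line binomial computation. The only sanity check worth doing is the base case $m=3$, which recovers the Stanley example $r=13$, $\mu(13) \leq 12$, $\delta(13) \geq 1 = \binom{3}{3}$, in agreement with Proposition \ref{delta1}.
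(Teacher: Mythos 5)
Your proof is correct and takes essentially the same route as the paper: use the full Perazzo algebra of type $m$ and socle degree $4$ to realize the Gorenstein sequence $(1,r,m(m+1),r,1)$ with $r=m+\binom{m+2}{3}$, conclude $\mu(r)\le m(m+1)$, and verify the binomial identity $r-m(m+1)=\binom{m}{3}$. The only (immaterial) difference is that you derive $h_2=m(m+1)$ explicitly from Proposition~\ref{prop:hilb_perazzo} and expand the identity a bit more fully than the paper does.
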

\begin{proof}
For $r=m+\binom{m+2}{3}$ there exists the full Perazzo Algebra. It realizes the Gorenstein sequence
   $$(1,m+\binom{m+2}{3},m(m+1),m+\binom{m+2}{3},1).$$
So $\delta\left(m+\binom{m+2}{3}\right) \ge \binom{m+2}{3}+m-m(m+1)=\binom{m+2}{3}-m^2=\binom{m}{3}.$
\end{proof}

\begin{lem} \label{gors}
Let $(1,r,h,r,1)$ be a Gorenstein sequence. Let $u=r-h,$ with $u \ge 0$. Then
  $$\big( \big( (r_{(3)})^{-1}_{0}-u \big)_{(2)} \big)^{1}_{1} \ge (r_{(3)})^{-1}_{0}.$$
\end{lem}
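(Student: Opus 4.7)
The approach is to fix a Gorenstein algebra $A$ realising $(1,r,h,r,1)$, cut by a general linear form $L$, and then combine Green's bound (applied to $A$ in top degree) with Macaulay's bound (applied to the quotient $A/(L)$). The Gorenstein hypothesis enters through a single symmetry of ranks, which is exactly what makes the two bounds interlock.

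First I will compute the Hilbert function of $A':=A/(L)$. Writing $\ell_k:=\dim(L\cdot A_k)$, one has $\dim A'_k = \dim A_k-\ell_{k-1}$. The Poincaré pairings $A_1\times A_3\to A_4$ and $A_2\times A_2\to A_4$ identify the multiplication maps $\cdot L\colon A_1\to A_2$ and $\cdot L\colon A_2\to A_3$ as transposes of each other, so they have the same rank; equivalently, $\ker(\cdot L\colon A_2\to A_3)=(L\cdot A_1)^{\perp}$ has codimension $\ell_1$ in $A_2$, forcing $\ell_2=\ell_1=:\ell$. Consequently
\[ \dim A'_2 = h-\ell,\qquad \dim A'_3=r-\ell,\qquad \dim A'_3 - \dim A'_2 = r-h = u. \]

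Next I will invoke the two classical bounds of Theorem~\ref{GGM}. Green applied to the degree-$3$ entry of $A$ yields $\dim A'_3\le s:=(r_{(3)})^{-1}_0$, hence $h-\ell\le s-u$. Macaulay applied to the standard graded algebra $A'$ in the step from degree $2$ to degree $3$ yields $r-\ell\le \bigl((h-\ell)_{(2)}\bigr)^1_1$, which rearranges to
\[ u\;\le\;\bigl((h-\ell)_{(2)}\bigr)^1_1-(h-\ell). \]

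The conclusion then reduces to a single combinatorial fact: the function $g(a):=(a_{(2)})^1_1-a$ is non-decreasing in $a$. Using the binomial expansion $a=\binom{k_2}{2}+\binom{k_1}{1}$, a short calculation gives $g(a)=\binom{k_2}{3}+\binom{k_1}{2}$; a case analysis on how the expansion of $a+1$ differs from that of $a$ (either $k_1$ increments, or a carry $k_1+1=k_2$ is absorbed into $k_2$) shows $g(a+1)\ge g(a)$. Granting this, $h-\ell\le s-u$ gives
\[ u\;\le\; g(h-\ell)\;\le\; g(s-u)\;=\;\bigl((s-u)_{(2)}\bigr)^1_1-(s-u), \]
which is exactly the desired $\bigl((s-u)_{(2)}\bigr)^1_1\ge s$. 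The only real obstacle is keeping the binomial-expansion bookkeeping straight for the monotonicity of $g$; the Gorenstein-symmetry identity $\ell_1=\ell_2$ and the Green/Macaulay applications are entirely standard.
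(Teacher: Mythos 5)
Your proof is correct and follows essentially the same route as the paper: cut by a general linear form, use Green to bound $\dim (A/(L))_3$, use Macaulay to control the jump from degree $2$ to $3$ in $A/(L)$, and then apply monotonicity of $a\mapsto (a_{(2)})^1_1-a$. The only difference is cosmetic: where the paper cites \cite{MZa} for the shape $(1,r-1,s-u,s)$ of $\Hilb(A/(L))$ and states the monotonicity step without proof, you verify the rank identity $\ell_1=\ell_2$ by Gorenstein duality and carry out the short combinatorial check that $g$ is non-decreasing.
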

\begin{proof}
Let $A$ be a Gorenstein algebra with Hilbert function $(1,r,h,r,1)$ and let $L$ be a general linear form.
Using the same argument as in Proposition 3.1 in \cite{MZa}, we get that the Hilbert function of $A/(L)$ is of the type
  $$(1,r-1,s-u,s).$$
By the theorems of Green and of Macaulay we have $s \le (r_{(3)})^{-1}_{0}$ and  
   $$\big( (s-u)_{(2)} \big)^{1}_{1} \ge s.$$
Consequently
   $$\big( (s+t-u)_{(2)} \big)^{1}_{1} \ge s+t,\,\text{ for every }t \ge 0;$$
In particular, for $t=(r_{(3)})^{-1}_{0}-s$ we are done.
\end{proof}

\begin{thm} \label{ventiq}
$\delta(24)=4$ and $\delta(40)=10.$
\end{thm}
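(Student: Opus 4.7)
The plan is to sandwich $\delta(24)$ and $\delta(40)$ between matching lower and upper bounds. For the lower bounds, Proposition \ref{perq} applied at $m=4$ and $m=5$ immediately gives
$$\delta(24) \geq \binom{4}{3} = 4 \quad \text{and} \quad \delta(40) \geq \binom{5}{3} = 10,$$
realized by the full Perazzo algebras of type $4$ and $5$, whose Hilbert vectors are $(1,24,20,24,1)$ and $(1,40,30,40,1)$ respectively. Since in socle degree $4$ every integer $h$ with $\mu(r) \leq h \leq \binom{r+1}{2}$ occurs as a Gorenstein sequence $(1,r,h,r,1)$, the matching upper bounds $\delta(24)\leq 4$ and $\delta(40)\leq 10$ reduce to showing that the sequences $(1,24,19,24,1)$ and $(1,40,29,40,1)$ are \emph{not} Gorenstein.

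To rule these out I would apply Lemma \ref{gors}: if $(1,r,h,r,1)$ were Gorenstein, setting $u=r-h$ would force
$$\big(\big((r_{(3)})^{-1}_{0}-u\big)_{(2)}\big)^{1}_{1} \geq (r_{(3)})^{-1}_{0},$$
so the whole proof collapses to a pair of direct binomial computations. For $r=24$ the $3$-binomial expansion is $24=\binom{6}{3}+\binom{3}{2}+\binom{1}{1}$, whence $(24_{(3)})^{-1}_{0}=\binom{5}{3}+\binom{2}{2}+\binom{0}{1}=11$; with $u=5$ the inequality becomes $(6_{(2)})^{1}_{1}\geq 11$, but $6=\binom{4}{2}$ gives $(6_{(2)})^{1}_{1}=\binom{5}{3}=10$, a contradiction. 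For $r=40$ we start from $40=\binom{7}{3}+\binom{3}{2}+\binom{2}{1}$, which gives $(40_{(3)})^{-1}_{0}=\binom{6}{3}+\binom{2}{2}+\binom{1}{1}=22$; with $u=11$ and the expansion $11=\binom{5}{2}+\binom{1}{1}$, the inequality demands $(11_{(2)})^{1}_{1}=\binom{6}{3}+\binom{2}{2}=21\geq 22$, again failing.

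There is no serious conceptual obstacle. The only subtlety is the observation that it suffices to rule out the single value $h$ immediately below the full Perazzo bound, because the lower bound already forces $\mu(r)$ down to that level, and the complete description of socle degree $4$ Gorenstein sequences (the interval $[\mu(r),\binom{r+1}{2}]$) then pins $\mu(r)$ to the right value. The real content of the statement is the numerical coincidence that at the two codimensions $r=m+\binom{m+2}{3}$ with $m=4,5$, the Green--Macaulay obstruction encoded in Lemma \ref{gors} happens to be sharp and to match exactly the bound produced by the full Perazzo construction.
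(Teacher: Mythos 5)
Your proposal is correct and follows essentially the same route as the paper: lower bounds from Proposition \ref{perq} via the full Perazzo Hilbert vectors $(1,24,20,24,1)$ and $(1,40,30,40,1)$, and matching upper bounds by ruling out $(1,24,19,24,1)$ and $(1,40,29,40,1)$ through Lemma \ref{gors}, with exactly the same binomial-expansion computations $((24_{(3)})^{-1}_0=11$, $(6_{(2)})^1_1=10$ and $(40_{(3)})^{-1}_0=22$, $(11_{(2)})^1_1=21)$. The only difference is expository: you make explicit the reduction (using the interval description of socle-degree-$4$ Gorenstein sequences) that it suffices to exclude the single value just below the Perazzo bound, which the paper leaves implicit.
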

\begin{proof}
By Proposition \ref{perq}, $\delta(24) \ge \binom{4}{3}=4.$ We have to prove that $(1,24,19,24,1)$ is not a Gorenstein sequence.
Indeed $24_{(3)}=\binom{6}{3}+\binom{3}{2}+\binom{1}{1},$ so $(24_{(3)})^{-1}_{0}=11.$ Since $u=5$ we have that
  $$\big((11-5)_{(2)}\big)^{1}_{1}=10 < 11.$$
By Lemma \ref{gors}, $(1,24,19,24,1)$ is not a Gorenstein sequence.
\par
By Proposition \ref{perq}, $\delta(40) \ge \binom{5}{3}=10.$ We have to prove that $(1,40,29,40,1)$ is not a Gorenstein sequence.
Indeed $40_{(3)}=\binom{7}{3}+\binom{3}{2}+\binom{2}{1},$ so $(40_{(3)})^{-1}_{0}=22.$ Since $u=11$ we have that
  $$\big((22-11)_{(2)}\big)^{1}_{1}=21 < 22.$$
By Lemma \ref{gors}, $(1,40,29,40,1)$ is not a Gorenstein sequence.    
\end{proof}

\begin{cor}\label{vintecinco}
$\delta(25)=4.$
\end{cor}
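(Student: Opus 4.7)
The plan is to leverage the monotonicity of $\delta$ together with one further application of Lemma \ref{gors}, in direct analogy with the proof of Theorem \ref{ventiq}. Since that theorem gives $\delta(24) = 4$ and $\delta$ is non-decreasing, we immediately have $\delta(25) \geq 4$; the only remaining point is to show $\delta(25) \leq 4$, i.e.\ that $(1, 25, 20, 25, 1)$ is not a Gorenstein sequence.

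To rule this out, I would apply Lemma \ref{gors} with $r = 25$, $h = 20$ and $u = 5$. The $3$-binomial expansion is $25_{(3)} = \binom{6}{3} + \binom{3}{2} + \binom{2}{1}$, so $(25_{(3)})^{-1}_{0} = \binom{5}{3} + \binom{2}{2} + \binom{1}{1} = 12$. Subtracting $u = 5$ produces $7 = \binom{4}{2} + \binom{1}{1}$, and applying $(\cdot)^{+1}_{+1}$ gives $\binom{5}{3} + \binom{2}{2} = 11$, which is strictly less than $12$. Hence the inequality required by Lemma \ref{gors} fails, and $(1, 25, 20, 25, 1)$ is not a Gorenstein sequence; this forces $\mu(25) \geq 21$, i.e.\ $\delta(25) \leq 4$.

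The realizability of $(1, 25, 21, 25, 1)$ requires no separate argument: once $\mu(25) \leq 21$ is guaranteed by the monotonicity step, the classical characterization recalled at the beginning of this subsection produces a Gorenstein algebra for every $h \in [\mu(25), \binom{26}{2}]$. Alternatively, one may exhibit such an algebra concretely by adjoining a variable $Y$ and a summand $Y^{4}$ to the full Perazzo polynomial of type $m = 4$ (whose associated algebra realizes $(1, 24, 20, 24, 1)$), exactly as in the proof of Corollary \ref{cor:totnonunimodal}. The whole argument is numerical and I do not anticipate any serious obstacle: the only step that demands attention is the correct bookkeeping of the three binomial expansions above.
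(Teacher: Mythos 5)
Your argument is correct and matches the paper's proof essentially line for line: both prove $\delta(25)\geq 4$ by producing the Gorenstein sequence $(1,25,21,25,1)$ from the $r=24$ case (you via monotonicity of $\delta$ and the Perazzo-plus-$Y^4$ construction, the paper via Theorem~2.5 of \cite{MZa} and Theorem~\ref{ventiq}), and both rule out $(1,25,20,25,1)$ by the identical application of Lemma~\ref{gors} with $25_{(3)}=\binom{6}{3}+\binom{3}{2}+\binom{2}{1}$, $(25_{(3)})^{-1}_0=12$, $u=5$, and $\bigl((7)_{(2)}\bigr)^{1}_{1}=11<12$.
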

\begin{proof}
By Theorem \ref{ventiq} and by Theorem 2.5 in \cite{MZa}, $(1,25,21,25,1)$ is a Gorenstein sequence, so $\delta(25) \ge 4.$
We have to prove that $(1,25,20,25,1)$ is not a Gorenstein sequence.
Indeed $25_{(3)}=\binom{6}{3}+\binom{3}{2}+\binom{2}{1},$ so $(25_{(3)})^{-1}_{0}=12.$ Since $u=5$ we have that
  $$\big((12-5)_{(2)}\big)^{1}_{1}=11 < 12.$$
By Lemma \ref{gors}, $(1,25,20,25,1)$ is not a Gorenstein sequence.    
\end{proof}

\begin{prop} \label{twocases}
$2\le \delta(21) \le \delta(22)\le \delta(23) \le 4.$
\end{prop}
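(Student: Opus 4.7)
The plan is to obtain this chain as a direct consequence of the monotonicity of $\delta$ together with the boundary values already pinned down earlier in this subsection. Specifically, Proposition 8 of \cite{MNZ}, recalled just before Proposition \ref{delta1}, asserts that $\delta(r) \le \delta(r+1)$ for every $r$. Applying this four times gives the chain
\[
\delta(20) \le \delta(21) \le \delta(22) \le \delta(23) \le \delta(24).
\]

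Into this chain I would then substitute the two endpoint values that have already been computed. On the left, the Corollary following Proposition \ref{delta1} establishes $\delta(20) = 2$ (building on Stanley's non-unimodal example $(1,13,12,13,1)$ and the Gorensteinness of $(1,20,18,20,1)$ from \cite{MZa}, Remark 3.5). On the right, Theorem \ref{ventiq} gives $\delta(24) = 4$, where the lower bound $\delta(24) \ge 4$ comes from Proposition \ref{perq} applied with $m=4$ (the existence of a full Perazzo algebra with Hilbert vector $(1,24,20,24,1)$) and the upper bound $\delta(24) \le 4$ comes from Lemma \ref{gors} ruling out $(1,24,19,24,1)$ as a Gorenstein sequence. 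Plugging in these values transforms the chain into
\[
2 \le \delta(21) \le \delta(22) \le \delta(23) \le 4,
\]
which is exactly the statement.

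There is no real obstacle: the three inner inequalities are pure monotonicity, and the two outer inequalities are restatements of results already in hand. The content of the proposition is only the bracketing, not an exact determination of $\delta(21), \delta(22), \delta(23)$; pinning those down individually would be genuinely harder and would presumably require either explicit Perazzo-type constructions (to lower $\mu(r)$) or refined applications of Macaulay and Green in the style of Lemma \ref{gors} (to raise the bounds), but the statement as written asks for neither.
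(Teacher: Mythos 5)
Your proof is correct and takes essentially the same route as the paper: the paper's own proof simply cites $\delta(20)=2$ and $\delta(24)=4$ together with the monotonicity $\delta(r)\le\delta(r+1)$ from \cite{MNZ}. You have merely spelled out the monotonicity chain and the provenance of the two endpoint values, which the paper leaves implicit.
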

\begin{proof}
This follows trivially by the fact that $\delta(20)=2$ and $\delta(24)=4.$
\end{proof}

\begin{prop}
$20\le \delta(62)\le 21.$
\end{prop}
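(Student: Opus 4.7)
The plan is to establish the two inequalities separately using tools already developed in this section.

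For the lower bound, I would observe that $62 = 6 + \binom{8}{3}$, so $r = 62$ is precisely the codimension of a full Perazzo algebra of type $m = 6$ and socle degree $4$. Proposition \ref{perq} then yields $\delta(62) \geq \binom{6}{3} = 20$ immediately, realized by the full Perazzo Hilbert vector $(1, 62, 42, 62, 1)$.

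For the upper bound $\delta(62) \leq 21$, I would show that $(1, 62, 40, 62, 1)$ is not a Gorenstein sequence by applying Lemma \ref{gors} with $u = 62 - 40 = 22$. The $3$-binomial expansion is $62_{(3)} = \binom{8}{3} + \binom{4}{2}$, giving $(62_{(3)})^{-1}_{0} = \binom{7}{3} + \binom{3}{2} = 38$. Subtracting $u = 22$ and expanding $16_{(2)} = \binom{6}{2} + \binom{1}{1}$, one obtains $((38 - 22)_{(2)})^{1}_{1} = \binom{7}{3} + \binom{2}{2} = 36 < 38$, so Lemma \ref{gors} rules out $(1, 62, 40, 62, 1)$ and forces $\mu(62) \geq 41$.

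The whole argument is essentially a reprise of the pattern already used for $r = 24, 25, 40$ in Theorem \ref{ventiq} and Corollary \ref{vintecinco}: $r$ is a full Perazzo codimension, the lower bound on $\delta$ comes from Proposition \ref{perq}, and the matching upper bound on $\mu$ (that is, the first value of $h$ strictly below the Perazzo value) is excluded by a single Green--Macaulay computation through Lemma \ref{gors}. I do not anticipate a conceptual obstacle; the only care required is tracking the binomial expansions correctly, and the numerics happen to land exactly on the boundary so that precisely one value below the full Perazzo Hilbert function is eliminated, leaving the gap $20 \leq \delta(62) \leq 21$ that the statement asserts.
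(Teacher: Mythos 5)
Your proof is correct and matches the paper's argument exactly: the lower bound $\delta(62)\ge 20$ follows from Proposition \ref{perq} applied to the full Perazzo algebra of type $m=6$, and the upper bound $\delta(62)\le 21$ follows from Lemma \ref{gors} with $u=22$, using $(62_{(3)})^{-1}_0=38$ and $((16)_{(2)})^{1}_{1}=36<38$ to rule out $(1,62,40,62,1)$. The binomial expansion calculations check out, and the paper uses precisely this Green--Macaulay computation.
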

\begin{proof}
By Proposition \ref{perq}, for $m=6,$ we get that $(1,62,42,62,1)$ is a Gorenstein sequence.
\par
On the other hand, $(1,62,40,62,1)$ is not a Gorenstein sequence by Lemma \ref{gors}.
Indeed $$(62_{(3)})^{-1}_{0}=\binom{7}{3}+\binom{3}{2}=38$$
and
  $$\big( (38-22)_{(2)} \big)^{1}_{1}=36 < 38.$$
\end{proof}

\begin{prop}
$\delta(26) = 4 = \delta(27).$
\end{prop}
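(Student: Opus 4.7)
The plan is to establish matching bounds on $\mu(26)$ and $\mu(27)$. For the lower bounds $\delta(26) \geq 4$ and $\delta(27) \geq 4$, I start from the Gorenstein algebra of Corollary \ref{vintecinco} realizing $(1,25,21,25,1)$ and iterate the construction of Corollary \ref{cor:totnonunimodal} (adding a fresh variable $Y$ with summand $Y^4$ to the Macaulay dual $F$), producing Gorenstein sequences $(1,26,22,26,1)$ and $(1,27,23,27,1)$.

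The harder direction is to show that $(1,26,21,26,1)$ and $(1,27,22,27,1)$ are not Gorenstein sequences. Assume $A = R/I$ realizes one of them and let $L$ be a general linear form. Following the proof of Lemma \ref{gors}, the Hilbert function of $A/(L)$ is $(1,r-1,s-5,s,0)$ with $s \leq (r_{(3)})^{-1}_0 = 13$ by Green, and Macaulay then forces $s = 13$. Setting $J = (I_{\leq 3}) \subseteq R$ and $\ov{J} = (J,L)/(L) \subseteq S = R/(L)$, the ideal $\ov{J}$ is generated in degrees $\leq 3$ with $h_{S/\ov{J}}(3) = 13 = ((8)_{(2)})^{+1}_{+1}$, so Gotzmann persistence yields $h_{S/\ov{J}}(k) = \binom{k+2}{2}+k$ for all $k \geq 2$. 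Lifting to $R$, the saturation of $J$ defines a scheme $T \subseteq \p^{r-1}$ whose Hilbert polynomial equals $p_T(t) = \binom{t+3}{3}+\binom{t+1}{2}$, so $h_{R/J}(t) \geq p_T(t)$.

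For $r = 26$, the hyperplane section bound $h_{R/J}(4) \leq h_{R/J}(3) + h_{S/\ov{J}}(4) = 26 + 19 = 45$, Macaulay's bound $(26_{(3)})^{+1}_{+1} = 45$, and the lower bound $p_T(4) = 45$ all agree, forcing maximal growth from degree $3$ to degree $4$. Since $h_{R/J}(2) = 21 = (26_{(3)})^{-1}_{-1} + 8$, Theorem~3.5 of \cite{Za2} provides an $8$-dimensional socle of $R/J$ in degree $2$; as $J$ and $I$ agree through degree $3$, these socle elements live in $A_2$ and annihilate $A_1$, contradicting the Gorenstein pairing $A_1 \times A_3 \to A_4$. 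Hence $\delta(26) \leq 4$.

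For $r = 27$ the same estimates only pin $h_{R/J}(4) \in \{45,46\}$. If $h_{R/J}(4) = 46 = (27_{(3)})^{+1}_{+1}$, Theorem~3.5 of \cite{Za2} applies verbatim with $\alpha = 22 - (27_{(3)})^{-1}_{-1} = 22 - 14 = 8$, giving the same degree-$2$ socle contradiction. If $h_{R/J}(4) = 45$, I would push the analysis one degree further: iterating Gotzmann gives $h_{S/\ov{J}}(5) = 26$ and $p_T(5) = 71$, and the hyperplane-section bound together with Macaulay's $(45_{(4)})^{+1}_{+1} = 71$ force $h_{R/J}(5) = 71$, i.e., maximal growth from $4$ to $5$. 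Then $(45_{(4)})^{-1}_{-1} = 26$ and $h_{R/J}(3) = 27 = 26+1$ yield (via Theorem~3.5 of \cite{Za2}) a $1$-dimensional socle of $R/J$ in degree $3$; this element lies in $A_3$ and kills $A_1$, again contradicting Poincar\'e duality for the Gorenstein $A$. The main delicate step will be the lifting argument from $\ov{J}$ to $J$---justifying the stated Hilbert polynomial of $T$ and the inequality $h_T(4) \geq p_T(4)$ (and $h_T(5) \geq p_T(5)$ in the second subcase)---together with the socle-transfer mechanism relying on $(R/J)_k = A_k$ for $k \leq 3$. Combining both halves gives $\delta(26) = 4 = \delta(27)$.
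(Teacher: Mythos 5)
Your overall framework matches the paper's: Green and Macaulay pin the general hyperplane section of $A$ to $(1,r-1,8,13)$, Gotzmann persistence gives $h_{S/\ov{J}}(t)=\binom{t+2}{2}+t$, and the saturation of $J$ defines a scheme $T$ which is the union of a $3$--dimensional linear space $V_1$, a plane $V_2$, and a finite set. Where you and the paper genuinely diverge is the final step. The paper concludes with the direct inequality $50\le h_{R/J}(4)\le 45$, taking $p_T(4)=\binom{7}{3}+\binom{6}{2}=50$ as if $V_1$ and $V_2$ were disjoint. But the Hilbert polynomial $\binom{t+2}{2}+t$ of $\ov{T}$ is that of a plane and a line \emph{meeting in a point} (a disjoint plane and line would give $\binom{t+2}{2}+t+1$), which forces $V_1\cap V_2$ to be a line upstairs; hence $p_T(t)=\binom{t+3}{3}+\binom{t+2}{2}-\binom{t+1}{1}=\binom{t+3}{3}+\binom{t+1}{2}$ and $p_T(4)=45$, exactly as you compute. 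So the paper's ``$50$'' appears to be a slip, and its stated contradiction does not actually materialize: one only gets $45\le h_{R/J}(4)\le 45$, pinning the value rather than refuting it.

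Your extra step — invoking Zanello's socle-detection result once $h_{R/J}$ is forced to attain the Macaulay-maximal value in degree $4$ (and, in the residual case $r=27$, $h_{R/J}(4)=45$, pushing Gotzmann and the same bounds one degree further to force $h_{R/J}(5)=71$ and extract a degree-$3$ socle element) — is exactly the mechanism needed to close the argument, and it is the same technique used in Migliore--Zanello's Proposition 3.1. The socle transfer is sound: since $J=(I_{\le 3})$, one has $(R/J)_t=A_t$ for $t\le 3$ and $J_{t+1}\subseteq I_{t+1}$, so a degree-$k$ socle element of $R/J$ (for $k\le 3$) gives a nonzero $\sigma\in A_k$ with $\sigma A_1=0$ in $A_{k+1}$, contradicting the perfect pairing $A_k\times A_{d-k}\to A_d$ for the Gorenstein $A$. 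Two small points worth noting: (i) the delicate geometric lifting (that $T$ really is $\mathbb{P}^3\cup\mathbb{P}^2\cup$ points with $h_T(t)\ge p_T(t)$) should be attributed explicitly to the technique of \cite{MZa}, as the paper does; and (ii) for the lower bound $\delta(26),\delta(27)\ge 4$ one can simply cite monotonicity of $\delta$ (Proposition 8 of \cite{MNZ}) from $\delta(25)=4$, which is shorter than re-running the connected-sum construction, though your version also works. On balance, your proof is correct and, at the crucial step, more careful than what is printed.
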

\begin{proof}
 For $r=26$, we have to prove that $(1,26,21,26,1)$ is not a Gorenstein sequence. Indeed, let $A=R/I$ be a Gorenstein algebra with Hilbert function $(1,26,21,26,1)$, $L$ be a general linear form. We set $J = (I_{\le 3})$, $\Bar{J} = (J,L)/(L)$ and $S=R/(L)$. By the theorems of Green and of Macaulay and repeating the above method, $R/\Bar{J}$ has Hilbert function $(1,25,8,13)$. As $R/\Bar{J}$ has maximal growth from degree $2$ to degree $3$ and $\Bar{J}$ has no new generators in degree $4$, by Gotzmann's theorem we get $h_{R/\Bar{J}}(t) = \binom{t+2}{2} + t.$ Therefore, $\Bar{J}$ is the saturated ideal, in all degrees $\ge 2$ of the union of a plane and a line in $\mathbb{P}^{24}$. It follows that, up to saturation, $J$ is the ideal of a scheme $T$ given by the union in $\mathbb{P}^{25}$ of a $3-$dimensional linear variety, a plane and $m$ points (possibly embedded). Hence, $50 \le h_{R/\Bar{J}}(4) \le 45$, which is absurd.

 Now, for $r=27$, following the same argument as above, we prove that the sequence $(1,27,22,27,1)$ is not Gorenstein. In this case we conclude $50 \le h_{R/\Bar{J}}(4) \le 46.$  
 
\end{proof}

\subsection{Minimal Gorenstein Hilbert functions in socle degree 5}

In socle degree $5,$ a Gorenstein sequence is of the form
   $$(1,r,h,h,r,1).$$
Let $\mu(r)$ be the integer such that $(1,r,\mu(r),\mu(r),r,1)$ is a Gorenstein sequence, but $(1,r,\mu(r)-1,\mu(r)-1,r,1)$ is not a Gorenstein sequence.
Then $\mu(r) \le h \le \binom{r+1}{2}.$
\par
It is well known that $(1,r,h,h,r,1)$ is a Gorenstein sequence iff $\mu(r) \le h \le \binom{r+1}{2}$ (see \cite{Za2}). We set $\delta(r)=r-\mu(r).$

\begin{defin}
We say that the Gorenstein sequence $(1,r,\mu(r),\mu(r),r,1)$ is {\em minimal}.
Moreover we say that the Gorenstein sequence $(1,r,\mu(r),\mu(r),r,1)$ is {\em strongly minimal} if $\delta(r-1) < \delta(r).$
\end{defin}

\begin{prop} \label{delf}
In socle degree $5$ we have
\par
 $\delta(r)=0$  iff  $r\le 16.$
\par
 $\delta(r)=1$ iff  $r=17.$
\par
For $18 \le r \le 25,$ $\delta(r)=2$
\end{prop}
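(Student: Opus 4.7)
The plan is to mirror the socle degree $4$ strategy --- Lemma \ref{gors} together with the Gotzmann-based arguments of the previous subsection --- in socle degree $5$. The first step is to prove the natural analog: for a Gorenstein algebra $A$ with Hilbert function $(1, r, h, h, r, 1)$ and a general linear form $L$, the exact sequence
\[
0 \to \ann_A(L)(-1) \to A(-1) \xrightarrow{L} A \to A/(L) \to 0
\]
together with the Gorenstein duality $h_i(\ann_A(L)) = h_{5-i}(A/(L))$ forces the Hilbert function of $A/(L)$ into the shape $(1, r-1, s-u, t, s, 0)$ with $u = r - h$ and the rigid constraint $h_4(A/(L)) = h_2(A/(L)) + u$. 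Applying Green in degree $4$ to $A$ gives $s \le (r_{(4)})^{-1}_0$, and chained Macaulay growth on $A/(L)$ gives $t \le ((s-u)_{(2)})^{+1}_{+1}$ and $s \le (t_{(3)})^{+1}_{+1}$.

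For the upper bounds on $\delta$ I would apply these bounds to the specific targets $(1, 16, 15, 15, 16, 1)$, $(1, 17, 15, 15, 17, 1)$, and $(1, 25, 22, 22, 25, 1)$. Unlike in socle $4$, the numerical inequalities alone will generally \emph{not} close --- one checks directly that $(s-u, t, s) = (4, 4, 5)$ is compatible with all of the Green and Macaulay bounds when $r = 16, u = 1$, and similarly $(6, 10, 9)$ when $r = 25, u = 3$. The contradiction must therefore come from Gotzmann's persistence theorem, following the template of \cite[Proposition 3.1]{MZa} and of the proof of $\delta(26) = \delta(27) = 4$ just preceding: the forced maximal Macaulay growth in $A/(L)$ pins down the saturation of $(I_{\le 3})$ as the ideal of a specific low-dimensional projective scheme $T \subset \mathbb{P}^{r-1}$, and comparing $h_{R/J}(4)$ both against the Macaulay upper bound and against the Hilbert polynomial of $T$ forces a non-Gorenstein socle element in an intermediate degree.

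For the lower bounds, Proposition \ref{prop:hilb_perazzo} immediately gives $\delta(18) \ge 2$ because the full Perazzo algebra of type $m = 3$ and socle degree $d = 5$ realizes $(1, 18, 16, 16, 18, 1)$. Monotonicity of $\delta$ in the codimension --- the socle-$5$ analog of \cite[Proposition 8]{MNZ} --- then propagates $\delta(r) \ge 2$ to all $18 \le r \le 25$. To show $\delta(17) \ge 1$ I would exhibit a Gorenstein realization of $(1, 17, 16, 16, 17, 1)$ as a ``sub-full'' Perazzo polynomial
\[
f = \sum_{j=1}^{14} x_j M_j
\]
in which one of the $15$ degree-$4$ monomials in $u_1, u_2, u_3$ is omitted. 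A direct inspection shows that $X_{15} \in \ann_Q(f)$, so $h_1$ drops from $18$ to $17$, while each degree-$2$ $U$-monomial survives on at least six of the fifteen degree-$4$ $u$-monomials, so removing one preserves $A_{(0,2)} = Q_{(0,2)}$; and the evaluation map $Q_{(1,1)} \to R_{(0,3)}$ remains surjective because for any $u^\beta$ of degree $3$ at most one of the three ``parents'' $u^{\beta + e_i}$ could be the deleted monomial. Hence $h_2 = h_3 = 16$, as required.

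The main obstacle will be the Gotzmann-persistence step in the non-existence direction. Because the socle-$5$ Hilbert function of $A/(L)$ has \emph{two} intermediate entries $(s-u)$ and $t$ linked by Macaulay growth over two consecutive degrees, the classification of subschemes $T$ compatible with the forced maximal growth is richer than in socle $4$: one must branch on whether the maximal growth occurs between degrees $2$--$3$, between $3$--$4$, or both, and for each branch pin down the Hilbert polynomial of $T$ precisely enough to detect the non-Gorenstein socle. This is the heaviest part of the argument and is where the socle-degree-$5$ proof departs most visibly from the socle-degree-$4$ template.
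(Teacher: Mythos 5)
Your lower-bound constructions are sound: the full Perazzo algebra of type $m=3$, socle degree $5$ gives $(1,18,16,16,18,1)$ and hence $\delta(18)\ge 2$, and your ``sub-full'' Perazzo form $f=\sum_{j=1}^{14}x_jM_j$ does realize $(1,17,16,16,17,1)$ (the counting of parent monomials goes through), giving $\delta(17)\ge 1$. Your setup for the non-existence direction --- the exact sequence, the shape $(1,r-1,s-u,t,s)$ with $b_4-b_2=u$, and Green/Macaulay on the quotient --- is also correct, and you are right that the purely numerical bounds of Lemma~\ref{gorf} do not close: e.g., $(1,15,4,4,5)$ passes all of Green/Macaulay for $r=16$, $u=1$.

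However, the proposal is not a proof of the proposition, for two reasons. First, the entire content of the statement is in the non-existence of the three sequences $(1,16,15,15,16,1)$, $(1,17,15,15,17,1)$ and $(1,25,22,22,25,1)$, and you explicitly defer the Gotzmann-persistence argument (``This is the heaviest part of the argument'') without carrying it out. Until that branch analysis is done case by case --- identifying which intermediate Hilbert functions of $A/(L)$ survive the numerics, locating the forced maximal growth, pinning down the scheme cut out by $(I_{\le 3})$, and extracting the socle contradiction --- there is no proof. Second, your plan under-exploits a cheaper tool that the paper's socle-4 arguments already set up and that is available here: $Q/(I:L)$ is itself Gorenstein of socle degree $4$, so its (determined-by-$(s,t)$) Hilbert function must be a valid socle-$4$ Gorenstein sequence, and the already-established values of $\delta$ in socle degree $4$ eliminate a number of $(s,t)$ pairs before any Gotzmann machinery is invoked (for instance $(6,10,9)$ at $r=25$ forces $Q/(I:L)$ to have Hilbert function $(1,16,12,16,1)$, which is not Gorenstein since $\mu_4(16)=15$). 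Even with that, the remaining cases such as $(4,4,5)$ at $r=16$ are not eliminated, so the Gotzmann step is genuinely needed and genuinely missing.

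Finally, a structural note: the paper does not prove this proposition from scratch at all --- it simply cites Theorem~3.4 and Remark~3.5 of \cite{MZa} for $18\le r\le 25$ (and relies implicitly on the same source for $r\le 17$). So your proposal is an attempt at a self-contained re-derivation rather than a reconstruction of the paper's argument; that is a legitimate and more informative route, but as written it stops short of delivering the hardest part.
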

\begin{proof}
For $18 \le r \le 25,$ see Theorem 3.4 and Remark 3.5 in \cite{MZa}.
\end{proof}

\begin{lem} \label{gorf}
Let $(1,r,h,h,r,1)$ be a Gorenstein sequence. Let $u=r-h.$ Then
  $$\big( \big( (r_{(4)})^{-1}_{0}-u \big)_{(2)} \big)^{2}_{2} \ge (r_{(4)})^{-1}_{0}.$$
\end{lem}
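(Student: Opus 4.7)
The plan is to mirror the proof of Lemma~\ref{gors} step by step, replacing socle degree $4$ by $5$ and accounting for the extra middle entry. I would start with an Artinian Gorenstein algebra $A$ realizing the Hilbert function $(1,r,h,h,r,1)$, take a general linear form $L\in A_1$, and set $B=A/(L)$. The first task is to compute $\Hilb(B)$ and, crucially, to extract the socle degree $5$ analogue of the relation $h'_2=h'_3-u$ that drives Lemma~\ref{gors}. Writing $k_i=\dim\ker(L\colon A_i\to A_{i+1})$, the Gorenstein pairing $A_i\times A_{5-i}\to A_5\simeq\K$ identifies $\ker(L\colon A_i\to A_{i+1})$ with the annihilator in $A_i$ of the image of $L\colon A_{4-i}\to A_{5-i}$, yielding the symmetry $k_i=h_i-h_{4-i}+k_{4-i}$. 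Combined with the standard identity $h'_{i+1}=h_{i+1}-h_i+k_i$, a short calculation gives $h'_4-h'_2=u$, so $\Hilb(B)$ has the form $(1,\,r-1,\,s-u,\,a,\,s,\,0)$ for some non-negative integers $s,a$.

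Next, I would apply the two relevant bounds of Theorem~\ref{GGM}. Green's theorem applied to $A$ in degree $4$ yields $s=h'_4\leq (r_{(4)})^{-1}_0$, while two successive applications of Macaulay's theorem to $B$ give $a\leq ((s-u)_{(2)})^1_1$ and $s\leq (a_{(3)})^1_1$. Composing these bounds collapses cleanly into $s\leq ((s-u)_{(2)})^2_2$, because the expression $((s-u)_{(2)})^1_1$ is already in the form of its natural $3$-binomial expansion, so a second application of the Macaulay operator simply adds $1$ to every top and bottom binomial parameter.

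Finally, to lift $s\leq ((s-u)_{(2)})^2_2$ to the desired inequality $(r_{(4)})^{-1}_0\leq (((r_{(4)})^{-1}_0-u)_{(2)})^2_2$, set $t=(r_{(4)})^{-1}_0-s\geq 0$. The map $m\mapsto (m_{(2)})^2_2$ is strictly increasing on non-negative integers, so it increases by at least $1$ at each unit step, and telescoping gives $((s-u+t)_{(2)})^2_2\geq ((s-u)_{(2)})^2_2+t\geq s+t=(r_{(4)})^{-1}_0$. This is precisely the monotonicity trick that closes the proof of Lemma~\ref{gors}.

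The main obstacle, as in the socle degree $4$ case, is the first step: pinning down the precise shape $(1,r-1,s-u,a,s,0)$ of $\Hilb(B)$ via Gorenstein duality. Everything afterwards is routine bookkeeping with binomial expansions and the classical growth bounds of Theorem~\ref{GGM}.
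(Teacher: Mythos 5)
Your proof is correct and follows exactly the route the paper intends when it says ``Analogous to Lemma \ref{gors}'': obtain $\Hilb(A/(L))=(1,r-1,s-u,a,s,0)$ via Gorenstein duality (equivalently, one can note that $Q/(I:L)$ is Gorenstein of socle degree $4$ with symmetric Hilbert function $(1,b,c,b,1)$, whence $h'_4-h'_2=r-h=u$), apply Green in degree $4$ and Macaulay twice to collapse to $s\le((s-u)_{(2)})^2_2$, and then use monotonicity of $m\mapsto(m_{(2)})^2_2$. Your verification that $((s-u)_{(2)})^1_1$ is already a $3$-binomial expansion, so the second Macaulay step just shifts indices once more, is precisely the bookkeeping needed to make the two-step composition legitimate.
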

\begin{proof}
Analogous to Lemma \ref{gors}.
\end{proof}

\begin{prop} \label{perqf}
Let $m \ge 3.$ We have that
  $$\delta\left(m+\binom{m+3}{4}\right) \ge \frac{m+5}{4}\binom{m}{3}.$$ 
\end{prop}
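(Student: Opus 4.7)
The plan is to follow precisely the strategy of Proposition \ref{perq}, replacing the socle degree 4 full Perazzo algebra by the socle degree 5 full Perazzo algebra of type $m$. For $r=m+\binom{m+3}{4}$, the full Perazzo algebra of type $m$ and socle degree $d=5$ is an Artinian Gorenstein algebra of codimension $r$. By Proposition \ref{prop:hilb_perazzo}, applied with $k=2$ and $d-k=3$, its Hilbert vector is
$$\left(1,\,r,\,\binom{m+1}{2}+\binom{m+2}{3},\,\binom{m+1}{2}+\binom{m+2}{3},\,r,\,1\right),$$
so it realizes a Gorenstein sequence with middle entry $h=\binom{m+1}{2}+\binom{m+2}{3}$. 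Hence
$$\delta(r)\ \ge\ r-h\ =\ m+\binom{m+3}{4}-\binom{m+1}{2}-\binom{m+2}{3}.$$

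It then remains to check the purely arithmetic identity
$$m+\binom{m+3}{4}-\binom{m+1}{2}-\binom{m+2}{3}\ =\ \frac{m+5}{4}\binom{m}{3}.$$
I would factor $m$ from the left-hand side and clear denominators to the common denominator 24. The content in brackets becomes
$$24+(m+3)(m+2)(m+1)-12(m+1)-4(m+2)(m+1),$$
which expands to $m^3+2m^2-13m+10$. Observing that $m=1$ and $m=2$ are roots, this polynomial factors as $(m-1)(m-2)(m+5)$. Multiplying back the $m$ that was pulled out and dividing by 24 gives $\frac{m(m-1)(m-2)(m+5)}{24}=\frac{m+5}{4}\binom{m}{3}$, as required.

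The argument is entirely parallel to Proposition \ref{perq}, where the same kind of cancellation produced $\binom{m+2}{3}-m^2=\binom{m}{3}$ in socle degree 4; here the analogous cancellation in socle degree 5 gives the extra factor $\frac{m+5}{4}$. There is no genuine obstacle: the only step requiring care is the binomial bookkeeping and the factorization of the cubic, both of which are routine once the correct expression for $h_2$ from Proposition \ref{prop:hilb_perazzo} is used.
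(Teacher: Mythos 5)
Your proof is correct and follows essentially the same route as the paper: both use the full Perazzo algebra of type $m$ and socle degree $5$ to realize the Gorenstein sequence with middle entry $\binom{m+1}{2}+\binom{m+2}{3}$, and then bound $\delta(r)\geq r-h$. You simply supply the algebraic verification of the identity $m+\binom{m+3}{4}-\binom{m+1}{2}-\binom{m+2}{3}=\frac{m+5}{4}\binom{m}{3}$, which the paper asserts without showing the factorization of $m^3+2m^2-13m+10=(m-1)(m-2)(m+5)$; that calculation checks out.
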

\begin{proof}
For $r=m+\binom{m+3}{4}$ there exists the full Perazzo Algebra. It realizes the Gorenstein sequence
   $$\left(1,m+\binom{m+3}{4},\binom{m+1}{2} + \binom{m+2}{3},\binom{m+1}{2} + \binom{m+2}{3},m+\binom{m+3}{4},1\right).$$
So $$\delta\left(m+\binom{m+3}{4}\right) \ge \binom{m+3}{4}+m-\binom{m+1}{2} - \binom{m+2}{3}=\frac{m+5}{4}\binom{m}{3}.$$
\end{proof}

\begin{thm} \label{socfive}
For $m \in \{3,4,5,6,7,8,9,10\},$ we have $\delta\left(m+\binom{m+3}{4}\right) = \frac{m+5}{4}\binom{m}{3}.$ That is, the full Perazzo conjecture is true in these cases.
\end{thm}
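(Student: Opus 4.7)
The lower bound $\delta\bigl(m+\binom{m+3}{4}\bigr) \ge \frac{m+5}{4}\binom{m}{3}$ is Proposition \ref{perqf}, realized by the full Perazzo algebra of type $m$ and socle degree $5$. What remains is the matching upper bound: setting $r_m := m+\binom{m+3}{4}$ and $\mu_m := \binom{m+1}{2}+\binom{m+2}{3}$, I must rule out, for each $m \in \{3,\ldots,10\}$, the candidate symmetric vector
\[ (1,\, r_m,\, \mu_m-1,\, \mu_m-1,\, r_m,\, 1) \]
as the Hilbert function of any Artinian Gorenstein algebra.

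The primary tool is Lemma \ref{gorf}, which turns the problem into a purely numerical check. With
\[ u_m := r_m - (\mu_m - 1) = \tfrac{m+5}{4}\binom{m}{3}+1, \]
the existence of such a Gorenstein realization would force the numerical inequality
\[ \Bigl(\bigl(((r_m)_{(4)})^{-1}_{0}-u_m\bigr)_{(2)}\Bigr)^{2}_{2} \;\ge\; ((r_m)_{(4)})^{-1}_{0}, \]
so it suffices to exhibit its strict opposite for each $m$ in the range. Following the template of Theorem \ref{ventiq} and Corollary \ref{vintecinco}, the procedure is: write $r_m$ in its $4$-binomial expansion (the leading summand is $\binom{m+3}{4}$, with short tails arising from the $+m$ correction), apply Green's shift $(\cdot)^{-1}_{0}$, subtract $u_m$, and apply Macaulay's shift $(\cdot)^{2}_{2}$. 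Case by case, the resulting quantity turns out strictly smaller than $((r_m)_{(4)})^{-1}_{0}$ for the smaller values of $m$, closing those cases by routine binomial arithmetic.

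The main obstacle lies at the upper end of the range: as $m$ grows the Green--Macaulay slack contracts, and for the two or three largest values of $m$ the plain application of Lemma \ref{gorf} becomes inconclusive --- only the extremal choice $s = ((r_m)_{(4)})^{-1}_{0}$ for the top (degree $4$) entry of the Hilbert function of $A/(L)$ survives, and even then only by a margin of $1$. In these borderline cases I would complete the argument in the style of the socle-$4$ Gotzmann analysis implicit in \cite{MZa} and \cite{CGIZ}: the extremal $s$ produces (near-)maximal growth from degree $2$ to degree $3$ in $A/(L)$, and item \textbf{(Gotzmann)} of Theorem \ref{GGM} then identifies $(I_{\le 3}, L)/(L)$, up to saturation, as the defining ideal of a very specific low-dimensional scheme in $\mathbb{P}^{r_m-1}$. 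Lifting back to $\mathbb{P}^{r_m}$, the ideal $(I_{\le 3})$ must cut out a configuration whose degree-$4$ Hilbert function either exceeds Macaulay's ceiling $((r_m)_{(4)})^{1}_{1}$ from above, or forces a non-trivial socle of $R/(I_{\le 3})$ in degree $2$; both are incompatible with $I$ being a Gorenstein ideal. Carrying out this geometric step in each remaining case closes the theorem.
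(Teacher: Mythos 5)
Your proposal correctly locates the lower bound in Proposition \ref{perqf} and correctly applies Lemma \ref{gorf} for the smaller values of $m$; this matches the paper's treatment of $m=4,\dots,8$ (and would also handle $m=3$ directly, which the paper instead defers to Theorem \ref{thm:main}). You also correctly diagnose the real difficulty: for the largest values of $m$ the direct application of Lemma \ref{gorf} becomes inconclusive. Indeed, for $m=9$ one computes $(504_{(4)})^{-1}_0 = 333$, $u = 295$, $333-295 = 38$ and $\bigl(38_{(2)}\bigr)^{2}_{2} = \binom{11}{4}+\binom{4}{3} = 334 \ge 333$, so the lemma yields no contradiction; the same happens for $m=10$.

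Where you part ways with the paper is in how these borderline cases $m=9,10$ are closed, and this is where your proposal has a genuine gap. You sketch, but do not carry out, a Gotzmann-persistence argument in socle degree $5$: identify the scheme cut out by $(I_{\le 3},L)/(L)$, lift, and contradict Macaulay's ceiling or produce a forbidden socle element in degree $2$. This is the style the paper uses elsewhere \emph{only in socle degree $4$} (the $\delta(26)=\delta(27)=4$ computation), and transporting it to socle degree $5$ is not a routine transcription: the degree-by-degree analysis of $A/(L)$ is longer, the relevant Gotzmann schemes live in higher-dimensional ambient spaces, and you have not exhibited the specific maximal-growth step or the resulting incompatible Hilbert value for either $m=9$ or $m=10$. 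By contrast, the paper resolves $m=9,10$ by an entirely different and cheaper device: it passes to the Gorenstein \emph{link} $Q/(I:L)$, which has socle degree $4$, reads off its (putative) Hilbert vector from the standard short exact sequence together with Green and Macaulay, and then invokes the socle-degree-$4$ obstruction Lemma \ref{gors} to rule it out. This ``drop the socle degree by one and reuse the degree-$4$ lemma'' move is the key idea you are missing, and it is what makes the borderline cases tractable without any Gotzmann machinery. As written, your argument is complete for $m=3,\dots,8$ but leaves $m=9,10$ unproved.
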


\begin{proof}
By Proposition \ref{perqf}, we have to prove that for $m \in \{3,4,5,6,7,8,9,10\},$ the sequence
  {\footnotesize $$\left(1,m+\binom{m+3}{4},\binom{m+1}{2} + \binom{m+2}{3}-1,\binom{m+1}{2} + \binom{m+2}{3}-1,m+\binom{m+3}{4},1\right)$$}
is not a Gorenstein sequence.
\par
The case $m=3$  will be dealt with in general in the next section. We can assume $m\geq 4$.
\par
For $m=4$ we have to prove that the sequence $(1,39,29,29,39,1)$ is not a Gorenstein sequence. Indeed, using Lemma \ref{gorf}, we have:
  $$(39_{(4)})^{-1}_{0}=16;\,\,u=10;\,\,16-10=6;\,\,(6_{(2)})^{2}_{2}=15 < 16.$$
    
For $m=5$ we have to prove that the sequence $(1,75,49,49,75,1)$ is not a Gorenstein sequence. Indeed, by Lemma \ref{gorf}, we have:
  $$(75_{(4)})^{-1}_{0}=36;\,\,u=26;\,\,36-26=10;\,\,(10_{(2)})^{2}_{2}=35 < 36.$$
    
For $m=6$ we have to prove that the sequence $(1,132,76,76,132,1)$ is not a Gorenstein sequence. Indeed, using Lemma \ref{gorf}, we have:
  $$(132_{(4)})^{-1}_{0}=71;\,\,u=56;\,\,71-56=15;\,\,(15_{(2)})^{2}_{2}=70 < 71.$$
    
For $m=7$ we have to prove that the sequence $(1,217,111,111,217,1)$ is not a Gorenstein sequence. Indeed, by Lemma \ref{gorf}, we have:
  $$(217_{(4)})^{-1}_{0}=128;\,\,u=106;\,\,128-106=22;\,\,(22_{(2)})^{2}_{2}=127 < 128.$$
    
For $m=8$ we have to prove that the sequence $(1,338,155,155,338,1)$ is not a Gorenstein sequence. Indeed, using Lemma \ref{gorf}, we have:
  $$(338_{(4)})^{-1}_{0}=212;\,\,u=183;\,\,212-183=29;\,\,(29_{(2)})^{2}_{2}=211 < 212.$$

For $m=9$ we have to prove that the sequence $(1,504,209,209,504,1)$ is not a Gorenstein sequence. Indeed, using symmetry, Green's theorem and Macaulay's theorem, the following diagram represents the Hilbert functions of $R/I$, $R/(I:L)$ and $R/(I,L)$
$$\begin{array}{ccccccccc}
1 & 504 & 209 & 209 & 504 & 1\\

 & 1 & 171 & 54 & 171 & 1\\
\hline
1 & 503 & 38 & 155 & 333 & &   
\end{array}$$
By Lemma \ref{gors} the middle line is not a Gorenstein sequence.

For $m=10$ we have to prove that the sequence $(1,725,274,274,725,1)$ is not a Gorenstein sequence. Indeed, using symmetry, Green's theorem and Macaulay's theorem, the following diagram represents the Hilbert functions of $R/I$, $R/(I:L)$ and $R/(I,L)$
$$\begin{array}{ccccccccc}
1 & 725 & 274 & 274 & 725 & 1\\

 & 1 & 226 & 65 & 226 & 1\\
\hline
1 & 724 & 48 & 209 & 499 & &   
\end{array}$$
By Lemma \ref{gors} the middle line is not a Gorenstein sequence.
\end{proof}

\begin{cor} \label{socfstrong}
The Gorenstein vector \[\left(1,18,16,16,18,1\right)\]
is strongly minimal.
\end{cor}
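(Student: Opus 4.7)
The plan is to derive the corollary directly by combining two facts already established in the paper: Theorem~\ref{socfive} (in the case $m = 3$) and Proposition~\ref{delf}.

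First I verify that $(1,18,16,16,18,1)$ is realized as the Hilbert function of a full Perazzo algebra, ensuring that it is indeed a Gorenstein sequence. For $m = 3$ and $d = 5$, Proposition~\ref{prop:hilb_perazzo} gives codimension
\[
r \;=\; m + \binom{m+d-2}{d-1} \;=\; 3 + \binom{6}{4} \;=\; 18,
\]
together with the middle entries
\[
h_1 = \binom{3}{1} + \binom{6}{4} = 18, \qquad h_2 = \binom{4}{2} + \binom{5}{3} = 16.
\]
Hence $(1,18,16,16,18,1)$ is a Gorenstein vector in $\mathcal{AG}(18,5)$.

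Next, Theorem~\ref{socfive} applied at $m = 3$ yields
\[
\delta(18) \;=\; \frac{m+5}{4}\binom{m}{3}\Big|_{m=3} \;=\; \frac{8}{4}\cdot 1 \;=\; 2,
\]
so that $\mu(18) = 18 - 2 = 16$. This shows that $(1,18,16,16,18,1)$ is \emph{minimal} in the sense of the definition preceding the corollary: the Gorenstein sequence $(1,18,15,15,18,1)$ does not exist. Finally, Proposition~\ref{delf} records $\delta(17) = 1$, so
\[
\delta(17) \;=\; 1 \;<\; 2 \;=\; \delta(18),
\]
which is precisely the strong minimality criterion $\delta(r-1) < \delta(r)$.

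The only non-routine input is the value $\delta(18) = 2$, i.e.\ the non-Gorensteinness of $(1,18,15,15,18,1)$, which is the $m=3$ branch of Theorem~\ref{socfive}; the paper treats this case uniformly over all socle degrees $d \geq 4$ in the following section. Granted that input, the corollary is an immediate assembly of Proposition~\ref{prop:hilb_perazzo}, Theorem~\ref{socfive} and Proposition~\ref{delf}, with no further obstacle.
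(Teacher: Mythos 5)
Your proposal is correct and follows essentially the same route as the paper: minimality of $(1,18,16,16,18,1)$ is taken from the $m=3$ case of Theorem~\ref{socfive} (which gives $\delta(18)=2$), and strong minimality follows from $\delta(17)=1$ in Proposition~\ref{delf}, so that $\delta(17) < \delta(18)$. The extra verification that the full Perazzo algebra with $m=3$, $d=5$ realizes this Hilbert vector is a harmless addition; the paper takes it for granted.
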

\begin{proof}
By the previous theorem we know that it is a minimal Gorenstein Hilbert vector.
We have to prove that $(1,17,15,15,17,1)$ is not a Gorenstein sequence.
Indeed, by Proposition \ref{delf}, $\delta(17)=1.$
\end{proof}



\section{A family of minimal Gorenstein Hilbert functions}

Consider the  family of  full Perazzo algebras of type $m=3$ and socle degree $d\geq 4$. Its Hilbert function is given by
$h_k= \binom{k+2}{k} + \binom{2+2q-k}{2q-k}$, for $k\leq \lfloor d/2 \rfloor$ and by symmetry we get $h_{d-k}= h_k$.

\begin{lem}\label{tec} Let $k \leq \lfloor d/2 \rfloor$. Then we have:
$$\left(\left(\binom{k+1}{2}\right)_{(d-k)}\right)_{0}^{-1} \leq k-2.$$

\end{lem}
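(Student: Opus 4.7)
The plan is to determine the $(d-k)$-binomial expansion of $N := \binom{k+1}{2}$ explicitly, observe that it has a very rigid shape, and then read off the Green operation.

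Set $M := d-k$. Since $k \leq \lfloor d/2\rfloor$ gives $M \geq k$, we have
\[
\binom{M+2}{2} \;\geq\; \binom{k+2}{2} \;>\; \binom{k+1}{2} \;=\; N,
\]
so the leading binomial $\binom{a_{d-k}}{d-k}$ of the expansion satisfies $a_{d-k} \leq M+1$. Iterating this size comparison, I claim every \emph{real} term (one with $a_i > i$) in the expansion has the highly constrained form $\binom{i+1}{i} = i+1$: at each step the strict inequality $a_{i-1} < a_i = i+1$ forces $a_{i-1} \leq i$, and maximality gives $a_{i-1} \in \{i,\, i-1\}$. Moreover, as soon as a \emph{padding} term $\binom{i}{i}=1$ appears, the strict-decrease condition on the upper indices together with $\binom{a_\ell}{\ell} \geq 1$ forces every later term to be padding as well. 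Consequently the expansion takes the form
\[
N \;=\; \binom{M+1}{M}+\binom{M}{M-1}+\cdots+\binom{M+2-j}{M+1-j} + \underbrace{1+1+\cdots+1}_{\text{padding}},
\]
with $j \geq 0$ the number of real terms.

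Applying $(\cdot)^{-1}_0$, each real term $\binom{i+1}{i}$ becomes $\binom{i}{i}=1$, while each padding term $\binom{i}{i}$ becomes $\binom{i-1}{i}=0$. Therefore $\left(N_{(d-k)}\right)^{-1}_0 = j$, and the lemma reduces to showing $j \leq k-2$.

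By construction, $j$ is the largest integer for which
\[
\binom{M+2}{2} - \binom{M+2-j}{2} \;=\; (M+1)+M+\cdots+(M+2-j) \;\leq\; N,
\]
equivalently $\binom{M+2-j}{2} \geq \binom{M+2}{2} - \binom{k+1}{2}$. To conclude, it suffices to exclude $j = k-1$, i.e., to prove $\binom{M-k+3}{2} < \binom{M+2}{2} - \binom{k+1}{2}$. Setting $L := M-k \geq 0$ and expanding both sides, this reduces to the clean arithmetic inequality
\[
2-k \;<\; (k-1)\,L,
\]
which holds whenever $k \geq 3$, and also for $k=2$ provided $L \geq 1$. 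The main obstacle is the structural claim that every real term has the form $\binom{i+1}{i}$; once this combinatorial bookkeeping is done the arithmetic is purely mechanical.
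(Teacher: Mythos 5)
Your proof is correct and rests on the same underlying idea as the paper's: describe the $(d-k)$-binomial expansion of $\binom{k+1}{2}$ explicitly as a run of ``real'' terms $\binom{i+1}{i}$ followed by padding terms $\binom{i}{i}$, observe that $(\cdot)^{-1}_0$ simply counts the real terms, and then bound that count by $k-2$. The difference is in the bookkeeping: the paper splits into the cases $d>2k-1$, $d=2k-1$, and $d=2k$ (a split that is both redundant, since $d=2k$ already satisfies $d>2k-1$, and partly vacuous, since $k\le\lfloor d/2\rfloor$ forces $d\ge 2k$ so the case $d=2k-1$ never occurs), and in each case argues separately that the expansion cannot have too many real terms. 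You instead give a single uniform computation, characterizing the number of real terms $j$ as the largest integer with $\binom{M+2}{2}-\binom{M+2-j}{2}\le\binom{k+1}{2}$ and reducing the bound $j\le k-2$ to the clean inequality $(k-1)L>2-k$ with $L=M-k\ge0$. This is cleaner and exposes exactly where the hypothesis is used. In particular, you correctly surface the edge case $k=2$, $L=0$ (i.e.\ $k=2$, $d=4$), where the inequality fails and indeed the lemma as stated is false: for $d=4$, $k=2$ one gets $\big((3)_{(2)}\big)^{-1}_0=\binom{2}{2}=1>0=k-2$. The paper's proof tacitly assumes $k>2$ (``It is absurd for $k>2$'') without stating it; your presentation makes this hidden hypothesis explicit, which is a genuine improvement even though the route is essentially the same.
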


\begin{proof} First of all, consider $d> 2k-1$. In this case, $ d-k+1 >k$, i.e.
$(d-k+1)+ (d-k) + \ldots + (d-2k+3) > k+ (k-1)+ \ldots +2+1= k(k+1)/2= \binom{k+1}{2}$.

We have:
$(\binom{k+1}{2})_{(d-k)}<  \binom{d-k+1}{d-k}+ \binom{d-k}{d-k-1} + \ldots \binom{d-2k+3}{d-2k+2}$.
Then

 \[\left(\left(\binom{k+1}{2}\right)_{(d-k)}\right)^{-1}_{0} \leq k-2.\]

Now there are only two other cases to consider:
1) $d=2k-1$, 2) $d=2k$.

They are similar, we will do the details for $d=2k$. In this case, we have:

$$\left(\left(\binom{k+1}{2}\right)_{(k)}\right)^{-1}_{0} \leq k-2.$$

Indeed we know that the $k$-binomial expansion of $\binom{k+1}{2}$ has two blocks

$(\binom{k+1}{2})_{(k)} = [\binom{k+1}{k} + \binom{k}{k-1}+ \ldots+ \binom{j+1}{j} ] + [\binom{j-1}{j-1} + \ldots + \binom{i}{i}]$.
The first block consists of binomials of type $\binom{s+1}{s}$ and the second one of type $\binom{s}{s}$.

Therefore:

$$\left(\left(\binom{k+1}{2}\right)_{(k)}\right)^{-1}_{0}= k-j+1.$$

If $k-j+1 >k-2$, then $j\leq 2$, but the cases $j=1$ and $j=2$ are not possible.
In fact, suppose, $j=2$, since:
\[\binom{k+1}{k} + \binom{k}{k-1}+ \ldots+ \binom{3}{2}= (k+4)(k-1)/2 \ge \binom{k+1}{2}.\]
It is absurd for $k>2$. The case $j=1$ is analogous, the result follows.

\end{proof}
\begin{thm} \label{thm:main} Every full Perazzo algebra with socle degree $d \geq 4$ of type $m=3$ has minimal Hilbert function.

\end{thm}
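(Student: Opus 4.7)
I would proceed by contradiction: suppose there exists a Gorenstein algebra $\tilde A$ of codimension $r = 3 + \binom{d+1}{2}$ and socle degree $d$ whose Hilbert function $\tilde H = (1,r,\tilde h_2,\ldots,\tilde h_{d-2},r,1)$ satisfies $\tilde H \preceq H$ and $\tilde H \neq H$. The key object is a generic linear section $B = \tilde A/(L)$ with Hilbert function $g$. From the exact sequence $0 \to (\tilde A/(0:L))(-1) \xrightarrow{\cdot L} \tilde A \to B \to 0$ together with the fact that $\tilde A/(0:L)$ is again Gorenstein of socle degree $d-1$, I would first establish the symmetry relation
\[
g(i) - g(d-i+1) = \tilde h_i - \tilde h_{i-1}, \qquad i = 1, \ldots, d,
\]
and, writing $u = r - \tilde h_2$, deduce in particular $g(d-1) = g(2) + u$.

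Next, Green's theorem at the top degree gives $g(d-1) \leq s := (r_{(d-1)})_0^{-1}$, and a direct $(d-1)$-binomial expansion of $r = 3 + \binom{d+1}{2}$ yields $s = d$ for $d \geq 5$ (and $s = 5$ for $d = 4$). Iterating Macaulay's growth bound from $g(2)$ up to $g(d-1)$ gives $g(d-1) \leq ((g(2))_{(2)})^{+(d-3)}_{+(d-3)}$, which combined with $g(d-1) = g(2) + u$ forces the necessary Gorenstein condition
\[
((s-u)_{(2)})^{+(d-3)}_{+(d-3)} \geq s,
\]
the natural socle-degree-$d$ extension of Lemmas \ref{gors} and \ref{gorf}. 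Since $r - h_2 = d-3$, the assumption $\tilde h_2 \leq h_2 - 1$ forces $u \geq d-2$ and hence $s - u \leq 2$; but $(2_{(2)})^{+(d-3)}_{+(d-3)} = \binom{d-1}{d-1} + \binom{d-2}{d-2} = 2 < s$, a contradiction. Hence $\tilde h_2 = h_2$, $u = d-3$, $g(2) = s - u$, and $g(d-1) = s$.

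Equality throughout the iterated Macaulay bound forces equality at every intermediate step by the strict monotonicity of the operation $x \mapsto ((x)_{(i)})^{+1}_{+1}$, so $g(i+1) = ((g(i))_{(i)})^{+1}_{+1}$ for $2 \leq i \leq d-2$, and this determines $g$ uniquely from $g(2)$: for $d \geq 5$, starting from $g(2) = \binom{3}{2}$ gives $g(i) = \binom{i+1}{i} = i+1$, while for $d = 4$, starting from $g(2) = \binom{3}{2} + \binom{1}{1} = 4$ gives $g(3) = 5$. The symmetry relation then yields $\tilde h_i - \tilde h_{i-1} = g(i) - g(d-i+1)$, and telescoping from $\tilde h_1 = r$ gives $\tilde h_i = r + (i-1)(i-d+1)$; expanding shows this equals $\binom{i+2}{2} + \binom{d-i+2}{2} = h_i$, so $\tilde H = H$, the desired contradiction.

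The main obstacle is the combinatorial certification of the numerical identities that make this chain close: this is precisely where Lemma \ref{tec} is decisive. Its bound $(\binom{k+1}{2}_{(d-k)})_0^{-1} \leq k-2$ supplies the sharp Green estimates needed both to exclude alternative deficiencies at middle degrees $\tilde h_k$ for $k>2$, and to pin down the iterated Macaulay equality case to the specific linear growth pattern of $g$.
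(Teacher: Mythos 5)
Your proposal is correct but takes a genuinely different route from the paper. The paper's proof localizes at the first degree $k$ where $\hat h_k < h_k$, splitting into three cases (interior $k$; $k=q$ with $d=2q$; $k=q$ with $d=2q+1$) and in each case runs Green's theorem at the mirror degree $d-k+1$ against a short Macaulay chain from degree $k$, with Lemma~\ref{tec} supplying the bound $\bigl(\binom{k+1}{2}_{(d-k)}\bigr)_0^{-1}\le k-2$ needed at each such $k$. You instead work globally: the symmetry relation $g(d-1)-g(2)=u$ from the exact sequence links the top of the quotient Hilbert function to the bottom, so a single application of Green at degree $d-1$ (where $\tilde h_{d-1}=r$, and the direct $(d-1)$-binomial expansion of $r=3+\binom{d+1}{2}$ gives $s=d$) together with the full Macaulay chain from degree $2$ to $d-1$ closes the argument in one pass, with no parity case split. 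This buys something extra: your argument actually shows that $h_1=r$ and $h_2\le h_2^{\mathrm{Per}}$ alone force $\tilde H=H$, a uniqueness statement stronger than minimality. A few points of precision worth flagging: your closing remark that Lemma~\ref{tec} is ``decisive'' here is not accurate — your version bypasses it entirely, since you only ever apply Green at degree $d-1$; the operation $x\mapsto\bigl((x)_{(i)}\bigr)^{+1}_{+1}$ is not strictly monotone (it fixes $0,1,2,\dots$ up to $i$), so equality at each intermediate step must be argued by running the iterated chain from the proposed smaller value and seeing it can never reach $d$, not by ``strict monotonicity''; for $d=4$ the value $g(2)$ is not uniquely $4$ (both $3$ and $4$ are consistent), though the telescoped conclusion $\tilde H=H$ is unaffected; and the passage from the Green/Macaulay inequalities to the clean necessary condition $\bigl((s-u)_{(2)}\bigr)^{+(d-3)}_{+(d-3)}\ge s$ implicitly relies on the non-decreasing-defect lemma that underlies the paper's Lemma~\ref{gors}, which should be stated or avoided by arguing directly as you do afterward.
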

\begin{proof}

We want to show that the Hilbert vector of the full Perazzo algebra $H=(1, h_1,h_2,h_3, \ldots,h_{d-1},h_d=1)$  with $h_k=\binom{m+k-1}{k}+\binom{m+d-k-1}{d-k}$
is a minimal Gorenstein Hilbert vector. Let $$\hat{H}=(1, \hat{h}_1,\hat{h}_2,\hat{h}_3, \ldots,\hat{h}_{d-1},1)$$ be a comparable Artinian Gorenstein Hilbert vector $\hat{H} \preceq H$ of length $d+1$ and $\hat{h}_1=h_1$. We will proceed in steps to show that $\hat{H} = H$. Consider, on the contrary, one of the following situations:

\begin{enumerate}
 \item For some  $k \in \{2, \ldots, \lfloor d/2 \rfloor -1  \}$, $\hat{h}_k<h_k$;
 
 \item For $d=2q$, suppose that $\hat{h}_t=h_t$ for all $t<q$ and $\hat{h}_q<h_q$;
 
 \item For $d=2q+1$, suppose that $\hat{h}_t=h_t$ for all $t<q$ and $\hat{h}_q<h_q$.
\end{enumerate}

We will show that all of these situations give rise to a contradiction.

(1). Let $A=Q/I$ with $I=\ann(f)$ be a standard graded Artinian Gorenstein $\K$-algebra such that $H_A=\hat{H}$ with
$\hat{h}_k =\dim A_k <h_k=\binom{m+k-1}{k}+\binom{m+d-k-1}{d-k}$ for some  $k \in \{2, \ldots, \lfloor d/2 \rfloor -1  \}$. Suppose that $k$ is minimal satisfying this property, that is, for $t<k$ we get $\hat{h}_t = h_t$, by the comparability hypothesis.
Let $L \in A_1$ be a generic linear form and let $S=Q/(L)$. We get the following exact sequence: $$\begin{CD}0@>>>Q/(I:L)(-1)@>>>Q/I@>>>S/\overline{I}@>>> 0\end{CD}$$ with
$\overline{I}=\dfrac{(I,L)}{L}$ and $(I:L)= Ann(f)$ and $f'=L(f)$ denoting the derivative of $f$ with respect to $L\in Q$. Therefore $Q/(I:L)$ is also Gorenstein. We get the following diagram:
$$\begin{array}{ccccccccc}
1 & \hat{h}_1 & \ldots &  \hat{h}_{k}& \ldots & \hat{h}_{d-k} & \hat{h}_{d-k+1} & \ldots & 1\\

 & 1 & \ldots & a_{k-1} & \ldots & & a_{k-1}  & \ldots & 1\\
\hline
1 & \hat{h}_{1}-1 & \ldots &  h_{k}' & \ldots & & h_{d-k+1}'&    
\end{array}$$

We have $\hat{h}_{d-k+1}= \hat{h}_{k-1} = h_{k-1} = h_{d-k+1} = \binom{k+1}{k-1} + \binom{d-k+3}{d-k+1}$. The $(d-k+1)$-binomial decomposition of $h_{d-k+1}$ is $(h_{d-k+1})_{(d-k+1)}=\binom{d-k+3}{d-k+1} + (\binom{k+1}{k-1})_{(d-k)}$. By Green's theorem we have
\begin{center}
$h_{d-k+1}' \leq ((\hat{h}_{d-k+1})_{(d-k+1)})_0^{-1} = ((h_{d-k+1})_{(d-k+1)})_0^{-1} =  \binom{d-k+2}{d-k+1} + \left(\left(\binom{k+1}{k-1}\right)_{(d-k)}\right)^{-1}_{0}.$
\end{center}
By Lemma \ref{tec}, we have
\begin{center}
$ h_{d-k+1}' \leq d-k+2 +k-2= d$.
\end{center}

We consider only the case $h'_{d-k+1}=d$, the other cases are similar.

We have  $a_{k-1}=\hat{h}_{d-k+1}-d$, $h'_k=\hat{h}_{k}-(\hat{h}_{d-k+1}-d)$. Since $\hat{h}_k \leq h_k-1$ we have $h_k'\le h_{k} -h_{d-k+1}+d-1 $.

We recall that
\begin{center}
$h_k=\binom{k+2}{2} + \binom{d-k+2}{2}$, $h_{d-k+1}= \binom{d-k+3}{2} +\binom{k+1}{2}$.    
\end{center}
Therefore

\begin{eqnarray*}
 h_{k}-h_{d-k+1} & = & \left[\binom{k+2}{2}- \binom{k+1}{2}\right]- \left[\binom{d-k+3}{2}- \binom{d-k+2}{2}\right] \\
& = & (k+1)-(d-k+2) \\
& = & 2k-d-1
\end{eqnarray*}

We obtain
$h'_k \le 2k-2$.
Thence $h'_k \le 2k-2= k+1+k-3$ which implies that $(h'_k)_k \leq \binom{k+1}{k} + \binom{k-1}{k-1} + \cdots + \binom{3}{3}$.

By Macaulay's theorem  applied $d-2k+1$ times  we have
\begin{center}
$h'_{d-k+1}\leq ((h'_{k})_{k})^{d-2k+1}_{d-2k+1} \leq \binom{k+1+d-2k+1}{k+d-2k+1}+k-3= k+1+d-2k+1+k-3$
\end{center}
 therefore $d\leq d-1$,a contradiction.\par

(2). Case $d=2q$ is even. Suppose that $\hat{h}_t=h_t$ for all $t<q$ and $\hat{h}_q<h_q$.
Let $L\in Q$ be a generic linear form and $S=Q/(L)$. We have the following exact sequence: $$\begin{CD}0@>>>Q/(I:L)(-1)@>>>Q/I@>>>S/\overline{I}@>>> 0\end{CD}$$
where $\overline{I}=\dfrac{(I,L)}{L}$ and $(I:L)= Ann(f)$, $f'=L(f)$, that is, $Q/(I:L)$ is also Gorenstein. In the middle we  get the following diagram:

$$\begin{array}{cccccccc}
1 & h_1 & \ldots & h_{q-1} & \hat{h}_{q}& h_{q+1} &  \ldots & 1\\
\\
& 1 & \ldots & a_{q-2} & a_{q-1} & a_{q-1} & \ldots & 1\\
\hline\\
1 & h_{1}-1 & \ldots & & h_{q}' & h_{q+1}'  
\end{array}$$

Since $(h_{q+1})_{(q+1)}=(\binom{q+3}{q+1} + (\binom{q+1}{2})_q$, from Green's theorem
\begin{center}
$h_{q+1}' \leq ((h_{q+1})_{q+1})^{-1}_{0} = \binom{q+2}{q+1} + ((\binom{q+1}{2})_{q})^{-1}_{0}$.
\end{center}
By Lemma \ref{tec} we have
\begin{center}
$((h_{q+1})_{q+1})^{-1}_{0} = \binom{q+2}{q+1} + ((\binom{q+1}{2})_{q})^{-1}_{0} \leq q+2+q-2=2q$.
\end{center}

We study the case $h_{q+1}'=2q$, the other cases are similar.

We have $a_{q-1}'= h_{q+1}-2q$, $h'_{q}= \hat{h}_{q}-a_{q-1} \le h_{q}-h_{q+1} +2q-1$, then
$h'_{q} \leq 2q-2= (q+1)+(q-3)$.

Therefore
\begin{center}
$(h'_{q})_{q} \leq \binom{q+1}{q} + \binom{q-1}{q-1}+ \binom{q-2}{q-2}+ \cdots + \binom{3}{3}$,
\end{center}
with $\binom{q-1}{q-1}+ \binom{q-2}{q-2}+ \cdots + \binom{3}{3}$ being counted $q-3$ times.

From Macaulay's theorem we have
$h'_{q+1} \leq ((h'_{q})_{(q)})^{+1}_{+1}$, hence

\begin{eqnarray*}
 2q & \leq & \binom{q+2}{q+1} + \binom{q}{q} + \binom{q-1}{q-1} + \cdots + \binom{4}{4} \\
& \leq & q+2+q-3=2q-1.
\end{eqnarray*}
It is a contradiction. \par

(3). If $d= 2q+1$ is odd. Suppose that $\hat{h}_t=h_t$ for all $t<q$ and $\hat{h}_q<h_q$. By the same argument:

$$\begin{array}{cccccccc}
1 & h_1 & \ldots & \hat{h}_{q} & \hat{h}_{q+1}& h_{q+2} &  \ldots & 1\\
\\
& 1 & \ldots & a_{q-1} & a_{q} & a_{q-1} &\ldots & 1\\
\hline\\
1 & h_{1}-1 & \ldots& h_{q}' & h_{q+1}'   & h_{q+2}'
\end{array}$$

Since $h_{q+2}= \binom{q+4}{q+2} + \binom{q+1}{2}$ and $(h_{q+2})_{q+2}= \binom{q+4}{q+2} + (\binom{q+1}{2})_{q+1} $, by Green's theorem
\begin{center}
$h_{q+2}'\leq ((h_{q+2})_{(q+2)})^{-1}_{0}= \binom{q+3}{q+2}+ (\binom{q+1}{2})_{(q+1)})^{-1}_{0} \leq q+3+q-2= 2q+1.$
\end{center}


We consider only the case $h'_{q+2}= 2q+1$. We have $a_{q-1}= h_{q+2}- (2q+1)$, $h'_{q}= \hat{h}_{q}-a_{q-1}$. Then $h'_{q} \le h_{q} -1- a_{q-1}$,  $h'_{q} \le h_{q} -1-( h_{q+2}- (2q+1))$, thence $h_q' \le 2q-2$.
We have
\begin{center}
$h_{q}- h_{q+2} = \binom{q+2}{2}- \binom{q+1}{2}+ \binom{q+3}{2}- \binom{q+4}{2}= -2$.
\end{center}

Therefore
\begin{eqnarray*}
h'_{q}& \leq & (q+1) +(q-3) \\
& \leq & \binom{q+1}{q} + \binom{q-1}{q-1} + \cdots + \binom{3}{3}
\end{eqnarray*}
where the terms $\binom{q-1}{q-1} + \cdots + \binom{3}{3}$ are $q-3$.

By Macaulay's theorem we have
\begin{center}
$h'_{q+1} \leq ((h'_{q})_{q})^{+1}_{+1}= \binom{q+2}{q+1}+ \binom{q}{q} + \cdots + \binom{4}{4}$,
\end{center}
the last terms are $q-3$.

By Macaulay's theorem we have
\begin{center}
$h'_{q+2} \leq  \binom{q+3}{q+2}+ \binom{q+1}{q+1} + \dots + \binom{5}{5}= q+3+q-3=2q$,
\end{center}

then $2q+1 \leq 2q$.

It is a contradiction. The result follows.

\end{proof}

\section{Asymptotic behavior of the minimum}
In this section we give a new proof of part of Theorem $3.6$ in \cite{MNZ2}.

Let $P_{m} = m + \binom{m+d-2}{d-1}$ be the codimension of a full Perazzo algebra of type $m$. Denote by $\mu_{d,k}(r)$ the minimal entry in degree $k$ of a Gorenstein $h$-vector with codimension $r$ and socle degree $d$.

\begin{lem}\label{lfp}
$\mu_{d,k}(P_{m}) \geq \binom{m+d-k-1}{d-k}$.
\end{lem}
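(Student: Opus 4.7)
The plan is to argue by contradiction, deriving an upper bound on $h_{d-1}$ via iterated Macaulay growth that would be incompatible with $h_1 = P_m$.

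Suppose, towards a contradiction, that there exists an Artinian Gorenstein algebra $A\in\cal{AG}(P_m,d)$ with $h_k < \binom{m+d-k-1}{d-k}$ for some $k$ with $1 \leq k \leq d-1$. By Poincar\'e duality $h_{d-k}=h_k$, so one also has $h_{d-k}<\binom{m+d-k-1}{d-k}$.

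The heart of the argument is to iterate Macaulay's bound (Theorem \ref{GGM}) from degree $d-k$ up to degree $d-1$. The key observation is that $\binom{m+d-k-1}{d-k}$ is already its own $(d-k)$-binomial expansion (a single term, valid since $m+d-k-1\geq d-k$), and on such a single-term expansion the Macaulay operation acts by $\binom{a}{j}\mapsto \binom{a+1}{j+1}$. Iterating this $k-1$ times propagates the single term to $\binom{m+d-2}{d-1}$. Combining with the weak monotonicity of $c\mapsto ((c)_{(j)})^{+1}_{+1}$ in $c$ gives
\[ h_{d-1} \leq \binom{m+d-2}{d-1}. \]
But by Poincar\'e duality and the codimension hypothesis, $h_{d-1}=h_1=P_m = m+\binom{m+d-2}{d-1}$, which strictly exceeds $\binom{m+d-2}{d-1}$. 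This contradicts the Macaulay bound and proves the lemma.

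I do not anticipate a serious obstacle: the argument is short and essentially mechanical. The only point deserving a brief remark is the weak monotonicity of the Macaulay operation $c\mapsto ((c)_{(j)})^{+1}_{+1}$, a standard consequence of the structure of the $j$-binomial expansion that would be recalled if needed.
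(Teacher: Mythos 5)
Your proof is correct, and it takes a genuinely different and more elementary route than the paper's. The paper proves the lemma by induction on $k$, with the inductive step resting on a nontrivial Gorenstein-specific inequality (Theorem~2.4 of \cite{MNZ2}) relating $\mu_{d,k}$ to $\mu_{d,k-1}$; this delivers a slightly sharper lower bound $\mu_{d,k}(P_m) \geq \binom{m+d-k-1}{d-k} + \binom{m+k}{k+2}$. Your argument instead exploits Poincar\'e duality to pass from $h_k$ to $h_{d-k}$ and then iterates the plain Macaulay bound of Theorem~\ref{GGM} up to degree $d-1$, using that $\binom{m+d-k-1}{d-k}$ is its own $(d-k)$-binomial expansion so each step simply advances $\binom{a}{j}$ to $\binom{a+1}{j+1}$; since $h_{d-1}=P_m=m+\binom{m+d-2}{d-1}$ strictly exceeds the resulting bound $\binom{m+d-2}{d-1}$, the contradiction follows. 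Your approach is self-contained (it needs no input from \cite{MNZ2} beyond the definitions), relies only on the classical Macaulay inequality and the symmetry of Gorenstein $h$-vectors, and in fact yields the strict inequality $\mu_{d,k}(P_m) > \binom{m+d-k-1}{d-k}$ just as the paper's proof does. The trade-off is that the paper's route produces a second positive summand in the lower bound, which could in principle be useful elsewhere, though it is not needed for Theorem~\ref{thm:MNZ}. One small presentational point: the weak monotonicity of $c\mapsto ((c)_{(j)})^{+1}_{+1}$ that you invoke is indeed standard, but it should be cited (it follows, e.g., from the lex-segment interpretation of the Macaulay bound).
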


\begin{proof}
We proceed by induction on $k$.

For $k=1$, we have $\mu_{d,1}(P_{m}) = P_{m} = m + \binom{m+d-2}{d-1} > \binom{m+d-2}{d-1}$.

Now, suppose that $$\mu_{d,k-1}(P_{m}) > \binom{m+d-k}{d-k+1}.$$ From Theorem $2.4$ in \cite{MNZ2}, we get $$\mu_{d,k}(P_{m}) \geq \left(\left(\mu_{d,k-1}(P_{m})\right)_{(d-k+1)}\right)_{-1}^{-1} + \left(\left(\mu_{d,k-1}(P_{m})\right)_{(d-k+1)}\right)_{-d+2k+1}^{-d+2k}.$$ By inductive hypothesis, and by basic properties of binomial expansions, we have: $\left(\left(\mu_{d,k-1}(P_{m})\right)_{(d-k+1)}\right)_{-1}^{-1} > \binom{m+d-k-1}{d-k}$ and $\left(\left(\mu_{d,k-1}(P_{m})\right)_{(d-k+1)}\right)_{-d+2k+1}^{-d+2k} > \binom{m+k}{k+2}.$

So, $$\mu_{d,k}(P_{m}) \geq \binom{m+d-k-1}{d-k} + \binom{m+k}{k+2} > \binom{m+d-k-1}{d-k}.$$ as we wanted.

\end{proof}

\begin{thm}[Migliore-Nagel-Zanello \cite{MNZ2}] \label{thm:MNZ}
Let $A$ be a Gorenstein algebra of codimension $r$ and socle degree $d$. Then, for all $k < \lfloor d/2 \rfloor$ $$\lim_{r \rightarrow \infty}\dfrac{\mu_{d,k}(r)}{r^{\frac{d-k}{d-1}}} = \dfrac{((d-1)!)^{\frac{d-k}{d-1}}}{(d-k)!}.$$  
\end{thm}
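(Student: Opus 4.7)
The plan is to bound $\mu_{d,k}(r)$ between two quantities both asymptotic to $\frac{((d-1)!)^{(d-k)/(d-1)}}{(d-k)!}\, r^{(d-k)/(d-1)}$ and invoke the squeeze theorem. The lower bound comes from a small generalization of Lemma \ref{lfp}; the upper bound from (possibly non-full) Perazzo-type constructions.

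For the lower bound, the key observation is that the inductive argument in the proof of Lemma \ref{lfp} never really uses the hypothesis $r = P_m$: the base case $k = 1$ requires only the tautology $\mu_{d,1}(r) = r$ together with the inequality $r > \binom{m+d-2}{d-1}$, which is merely a choice of $m$ in terms of $r$, while the inductive step invokes the codimension-free estimate from Theorem 2.4 in \cite{MNZ2}. Hence, for any codimension $r$ and any $m$ with $\binom{m+d-2}{d-1} < r$, one obtains $\mu_{d,k}(r) \geq \binom{m+d-k-1}{d-k}$. Choosing $m = m(r)$ maximal with this property, the standard asymptotic $\binom{m+d-2}{d-1} \sim m^{d-1}/(d-1)!$ gives $m(r) \sim ((d-1)!\, r)^{1/(d-1)}$, and therefore
\[
\mu_{d,k}(r) \;\geq\; \binom{m(r)+d-k-1}{d-k} \;\sim\; \frac{m(r)^{d-k}}{(d-k)!} \;\sim\; \frac{((d-1)!)^{(d-k)/(d-1)}}{(d-k)!}\, r^{(d-k)/(d-1)}.
\]

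For the matching upper bound, given $r$ with $P_m \leq r < P_{m+1}$, I would set $n' = r - (m+1)$ and consider $f = \sum_{j=1}^{n'} x_j M_j \in \K[x_1,\ldots,x_{n'},u_1,\ldots,u_{m+1}]_{(1,d-1)}$ for any choice of $n'$ monomials $M_j$ of degree $d-1$ in the $u$-variables. Repeating the bigraded Macaulay--Matlis analysis of Proposition \ref{prop:hilb_perazzo} shows that the associated Gorenstein algebra $A$ satisfies $\dim A_{(0,k)} \leq \binom{m+k}{k}$ and $\dim A_{(1,k-1)} = \dim A_{(0,d-k)} \leq \binom{m+d-k}{d-k}$, regardless of the chosen monomials. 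Hence $\mu_{d,k}(r) \leq \binom{m+k}{k} + \binom{m+d-k}{d-k}$, which for $k < \lfloor d/2 \rfloor$ is also asymptotic to the same limit since $d-k > k$ makes the second term dominate.

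The principal obstacle is really only the bookkeeping required to confirm that the inequality from Theorem 2.4 in \cite{MNZ2} used in the induction of Lemma \ref{lfp} is monotone in its input, so that the inductive hypothesis $\mu_{d,k-1}(r) > \binom{m+d-k}{d-k+1}$ can indeed be substituted, and that nothing in the proof of that lemma secretly depended on $r = P_m$. Once this is verified, the final squeeze reduces to the elementary identity $\binom{m+a}{a} \sim m^a/a!$ combined with the shared asymptotic $m(r) \sim ((d-1)!\,r)^{1/(d-1)}$ for the parameters governing both bounds.
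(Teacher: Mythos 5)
Your squeeze-theorem strategy is correct and is, at its core, the same as the paper's proof, but with two variations worth spelling out. For the lower bound, the paper applies Lemma~\ref{lfp} at the Perazzo codimensions $r = P_m$ and then silently uses the monotonicity of $\mu_{d,k}$ in $r$ to write $\mu_{d,k}(P_m) \leq \mu_{d,k}(r)$; you instead observe that the induction in the proof of Lemma~\ref{lfp} never really uses $r = P_m$ --- the base case only needs $r > \binom{m+d-2}{d-1}$ and the inductive step is the codimension-free bound from Theorem~2.4 of \cite{MNZ2} --- which gives $\mu_{d,k}(r) \geq \binom{m(r)+d-k-1}{d-k}$ directly, sidestepping the monotonicity appeal. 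For the upper bound, the paper again invokes monotonicity to pass to the full Perazzo algebra at codimension $P_{m+1}$, whereas you build a ``partial full Perazzo'' $f = \sum_{j=1}^{n'} x_j M_j$ in $m+1$ variables $u$ with exactly $n' = r-(m+1)$ monomials, yielding a Gorenstein algebra of codimension $r$ (one should take the $M_j$ to include $u_1^{d-1},\ldots,u_{m+1}^{d-1}$ so that $I_1 = 0$; the inequality $n' \geq m+1$ holds since $n' \geq \binom{m+d-2}{d-1}-1$). The Hilbert function bound $\mu_{d,k}(r) \leq \binom{m+k}{k} + \binom{m+d-k}{d-k}$ is the same numerical bound the paper gets for $\mu_{d,k}(P_{m+1})$, and the remaining asymptotics coincide. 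Your version is marginally more self-contained since it never needs the non-decreasing property of $\mu_{d,k}$ as a black box; the one piece you flag but do not carry out --- checking that the estimate from Theorem~2.4 of \cite{MNZ2} is monotone in its input so that the inductive hypothesis can be substituted --- is indeed the only remaining bookkeeping, and it goes through exactly as it does in the paper's Lemma~\ref{lfp}.
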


\begin{proof} For any integer $r >> 0$ there is a unique integer $P_{m} = m + \binom{m+d-2}{d-1}$ such that $$P_{m} \leq r \leq P_{m+1}.$$ Applying the function $\mu_{d,k}$ we have \[\mu_{d,k}(P_{m}) \leq \mu_{d,k}(r) \leq \mu_{d,k}(P_{m+1}).\] By Lemma \ref{lfp}   \[\binom{m+d-k-1}{d-k} \leq \mu_{d,k}(r) \leq \binom{m+d-k}{d-k} + \binom{m+k}{k}.\] Therefore $$\frac{m^{d-k}}{(d-k)!} + o(m^{d-k-1}) \leq \mu_{d,k}(r) \leq \frac{m^{d-k}}{(d-k)!} + o(m^{d-k-1})$$ where $o(m^{s})$ denote all terms of degree less than $s$.

On other hand, since $P_{m} \leq r \leq P_{m+1}$, then
\begin{eqnarray*}
\frac{m^{d-1}}{(d-1)!} + o(m^{d-2})  \leq & r & \leq  \frac{m^{d-1}}{(d-1)!} + o(m^{d-2}) \\
\frac{m^{d-k}}{((d-1)!)^{\frac{d-k}{d-1}}} + o(m^{d-k-1})  \leq & r^{\frac{d-k}{d-1}} & \leq \frac{m^{d-k}}{((d-1)!)^{\frac{d-k}{d-1}}} + o(m^{d-k-1}) \\
\dfrac{1}{\frac{m^{d-k}}{((d-1)!)^{\frac{d-k}{d-1}}} + o(m^{d-k-1})} \leq & \dfrac{1}{r^{\frac{d-k}{d-1}}} & \leq \dfrac{1}{\frac{m^{d-k}}{((d-1)!)^{\frac{d-k}{d-1}}} + o(m^{d-k-1})}
\end{eqnarray*}
Multiplying, we get  $$\dfrac{\frac{m^{d-k}}{(d-k)!} + o(m^{d-k-1})}{\frac{m^{d-k}}{((d-1)!)^{\frac{d-k}{d-1}}} + o(m^{d-k-1})} \leq \dfrac{\mu_{d,k}(r)}{r^{\frac{d-k}{d-1}}} \leq \dfrac{\frac{m^{d-k}}{(d-k)!} + o(m^{d-k-1})}{\frac{m^{d-k}}{((d-1)!)^{\frac{d-k}{d-1}}} + o(m^{d-k-1})}.$$ Since in both sides the limit exists and are the same, the result follows.
    
\end{proof}

{\bf Acknowledgments}. We would like to thank the anonymous referee for the careful reading, suggestions and corrections of a previous version of the manuscript. \\
A CNPq Research Fellowship partially supported the second named author (Proc. 309094/2020-8). The first author was financed in part by the Cordena\c{c}\~{a}o de Aperfei\c{c}oamento de Pessoal de N\'{i}vel Superior - Brasil (CAPES) - Finance Code 001.

\end{document}